\newcommand{\R}{\mathbb{R}}
\newcommand{\N}{\mathbb{N}}
\newcommand{\Om}{\Omega}
\newcommand{\pip}{\varphi}
\newcommand{\homB}{HB^\theta_{p,p}}
\newcommand{\homb}{\homB(\partial\Om)}
\newcommand{\inB}{B^\theta_{p,p}}
\newcommand{\inb}{\inB(\partial\Om)}
\newcommand{\dotb}{\dot{B}^\theta_{p,p}}
\newcommand{\homn}{HN^{1,p}(\Om)}
\newcommand{\inn}{N^{1,p}(\Om)}
\DeclareMathOperator*{\diam}{diam}
\DeclareMathOperator*{\diver}{div}
\DeclareMathOperator*{\dist}{dist}
\DeclareMathOperator*{\rad}{rad}
\DeclareMathOperator*{\Tr}{Tr}
\DeclareMathOperator*{\DtN}{DtN}
\DeclareMathOperator*{\NtD}{NtD}
\DeclareMathOperator*{\dtn}{DtN}
\DeclareMathOperator*{\ntd}{NtD}
\DeclareMathOperator*{\elimsup}{ess\,lim\,sup}
\newcommand{\capa}{\text{Cap}_p}
\newcommand{\loc}{\mathrm{loc}}
\newcommand{\eps}{\varepsilon}
\theoremstyle{plain}
\newtheorem{theorem}[equation]{Theorem}
\newtheorem*{introtheorem}{Theorem}
\newtheorem{lemma}[equation]{Lemma}
\newtheorem{corollary}[equation]{Corollary}
\numberwithin{equation}{section}
\theoremstyle{definition}
\newtheorem{definition}[equation]{Definition}
\newtheorem{standing}[equation]{Standing Assumptions}
\theoremstyle{remark}
\newtheorem{remark}[equation]{Remark}
\begin{document}

\title[Dirichlet-to-Neumann for metric measure spaces]{On the Dirichlet-to-Neumann Map for the $p$-Laplacian on a Metric Measure Space}


\author{Ryan Gibara}
\address{Department of Mathematical Sciences, P.O. Box 210025, University of
Cincinnati, Cincinnati, OH 45221--0025, U.S.A.}
\email{gibararn@ucmail.uc.edu}

\author{Nageswari Shanmugalingam}
\address{Department of Mathematical Sciences, P.O. Box 210025, University of
Cincinnati, Cincinnati, OH 45221--0025, U.S.A.}
\email{shanmun@uc.edu}
\thanks{N.S.'s research was partially supported by NSF (US)
grant DMS\#2054960.}

\date{\today}

\keywords{boundary-value problems, Besov space, Newton-Sobolev space, doubling measure, 
homogeneous spaces, Poincar\'e inequality,
upper gradients, Cheeger differential structure, Dirichlet-to-Neumann map.}

\subjclass[2020]{Primary: 31E05; Secondary: 46E36, 31B20, 45Q05.}

\begin{abstract}
In this note, we construct a Dirichlet-to-Neumann map, from a Besov space of functions, to the dual of
this class. The Besov spaces are of functions on the boundary of a bounded, locally compact uniform domain equipped with
a doubling measure supporting a $p$-Poincar\'e inequality so that this boundary is also equipped with a Radon measure that
has a codimensional relationship with the measure on the domain. We construct this map via the following recipe.
We show first that solutions to Dirichlet problem for the $p$-Laplacian 
on the domain with prescribed boundary data in the Besov space
induce an operator that lives in the dual of the Besov space. 
Conversely, we show that there is a solution, in the homogeneous Newton-Sobolev space, 
to the Neumann problem for the $p$-Laplacian with the Neumann boundary data given by a continuous linear functional 
belonging to the dual of the Besov space. We also obtain bounds on its operator norm in terms of the norms of trace and 
extension operators that relate Newton-Sobolev functions on the domain to Besov functions on the boundary.
\end{abstract}

\maketitle


\section{Introduction}

Classically, for a sufficiently smooth bounded domain $\Omega$ in $\R^n$, the Dirichlet-to-Neumann map 
for the Laplacian is $\DtN:H^{1/2}(\partial\Omega)\rightarrow H^{-1/2}(\partial\Omega)$ 
given by $\DtN(f):=\partial_\eta u:=\eta\cdot\nabla u$, where $\eta$ is the unit outer normal vector to the 
domain $\Omega$ and $u$ is the unique harmonic extension of $f$ from the boundary $\partial\Om$ to $\Omega$. Here, 
$H^{1/2}(\partial\Omega)$ is the classical fractional Sobolev space, with $\partial\Omega$ equipped 
with its $(n-1)$-dimensional Hausdorff measure, and $H^{-1/2}(\partial\Omega)$ is its dual. For more 
general domains, where $\eta$ may not be well defined and $\nabla u$ may not exist pointwise, one may 
define the outer normal derivative of $u\in H^1(\Omega)$ in the weak sense that 
\[
\int_{\Omega}\! (\Delta u)(v) +\int_{\Omega}\!\nabla u \cdot \nabla v = \int_{\partial\Omega}\! 
(\partial_\eta u)(\Tr v) 
\]
for all $v\in H^1(\Omega)$. Here $\partial_\eta u$ should be thought of as a function on $\partial\Om$
that serves the role of a weak outer normal derivative of $u$ at $\partial\Om$; such function may not always exist.
It should be clear that the role of $\partial_\eta u$ in the above is 
reminiscent of that of the usual outer normal derivative in Green's formula. The corresponding 
Dirichlet-to-Neumann map has been studied for other elliptic operators such as the divergence-form 
operator $L=\diver(\gamma\nabla u)$ for a smooth function $\gamma$, the Helmholtz operator, or Schr\"odinger operator,
see \cite{AE, CM, FKOU, Tay}. The study of the Dirichlet-to-Neumann map has also been extended 
to the setting of Riemannian manifolds for the Laplace-Beltrami operator and 
generalized to differential forms \cite{CGGS,GKLP}. 
The recent paper~\cite{Daners2} gave a construction 
of a Dirichlet-to-Neumann map for the nonlinear operator $\Delta_p$, i.e.~the $p$-Laplacian, in the setting of
bounded Lipschitz domains in Euclidean spaces.

The goal of the present paper is to generalize this problem even further to the setting of a 
metric measure space $(\Omega, d,\mu)$ that is 
bounded, locally compact, supporting a doubling measure $\mu$ and satisfies a $p$-Poincar\'e inequality for a 
fixed $1<p<\infty$, so that $\Om$ is also a uniform domain in its completion $\overline{\Om}$. 
We consider Cheeger-type operators  that are analogs of the $p$-Laplacian, 
and endow the boundary $\partial\Omega$ with a measure $\nu$ that is in a codimensional relationship with $\mu$. 

The first main result of this note, contained within Theorem~\ref{th:2}, is the following, in which we demonstrate the 
existence of a solution in the homogeneous Newton-Sobolev space $HN^{1,p}(\Omega)$ to the Neumann 
problem with boundary data taken from the dual of the homogeneous Besov space 
$HB^\theta_{p,p}(\partial\Omega)$. Here, $\theta$ depends on $p$ and the 
codimensionality of the boundary measure $\nu$.

\begin{introtheorem}
Given an operator $L\!\in\!(HB^\theta_{p,p}(\partial\Omega))^*$, there exists $u_L\!\in\! HN^{1,p}(\Omega)$ satisfying
\[
\int_{\Omega}\!|\nabla u_L|^{p-2}\langle \nabla u_L, \nabla v\rangle\,d\mu = L(\Tr(v))
\]
for all $v\in HN^{1,p}(\Omega)$. 
\end{introtheorem}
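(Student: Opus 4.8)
The plan is to produce $u_L$ as a minimizer of the energy functional
\[
J(v):=\frac1p\int_{\Omega}|\nabla v|^p\,d\mu - L(\Tr(v)),\qquad v\in HN^{1,p}(\Omega),
\]
by the direct method in the calculus of variations. First I would check that $J$ is well defined and finite on all of $HN^{1,p}(\Omega)$; this uses the boundedness of the trace operator $\Tr\colon HN^{1,p}(\Omega)\to HB^\theta_{p,p}(\partial\Omega)$ established earlier, which gives $|L(\Tr v)|\le\|L\|\,\|\Tr\|\,\bigl\|\,|\nabla v|\,\bigr\|_{L^p(\Omega)}$ and shows that $v\mapsto L(\Tr v)$ is a bounded linear functional on $HN^{1,p}(\Omega)$. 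Coercivity of $J$ is then immediate from
\[
J(v)\ge\frac1p\bigl\|\,|\nabla v|\,\bigr\|_{L^p(\Omega)}^p-\|L\|\,\|\Tr\|\,\bigl\|\,|\nabla v|\,\bigr\|_{L^p(\Omega)},
\]
since $1<p<\infty$ makes the $p$-th power dominate the linear term; in particular $J$ is bounded below.

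Next I would take a minimizing sequence $(v_j)$ for $J$. Coercivity makes $(v_j)$ bounded in the seminorm $\bigl\|\,|\nabla\cdot|\,\bigr\|_{L^p(\Omega)}$, which on $HN^{1,p}(\Omega)$ — classes of functions modulo constants, where $|\nabla v|\equiv 0$ forces $v$ constant by the Poincar\'e inequality — is a norm. Relying on the Cheeger differential structure, the map $v\mapsto\nabla v$ identifies $HN^{1,p}(\Omega)$ with a closed subspace of an $L^p$-space of measurable sections, so $HN^{1,p}(\Omega)$ is reflexive and a subsequence of $(v_j)$ converges weakly to some $u_L\in HN^{1,p}(\Omega)$. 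The functional $J$ is weakly lower semicontinuous: $v\mapsto\frac1p\int_\Omega|\nabla v|^p\,d\mu$ is convex and norm-continuous, hence weakly lower semicontinuous, while $v\mapsto L(\Tr v)$ is a bounded linear functional, hence weakly continuous. Therefore $u_L$ minimizes $J$.

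Finally I would compute the first variation. For fixed $v\in HN^{1,p}(\Omega)$ the map $t\mapsto J(u_L+tv)$ is differentiable at $t=0$: the integrand $|\nabla u_L+t\nabla v|^p$ is $C^1$ in $t$ with derivative $p\,|\nabla u_L+t\nabla v|^{p-2}\langle\nabla u_L+t\nabla v,\nabla v\rangle$, and the pointwise bound $|\nabla u_L+t\nabla v|^{p-1}|\nabla v|\le C\bigl(|\nabla u_L|^{p-1}+|\nabla v|^{p-1}\bigr)|\nabla v|$, integrable by H\"older's inequality, allows differentiation under the integral sign. Since $t\mapsto L(\Tr(u_L+tv))=L(\Tr u_L)+t\,L(\Tr v)$ is affine, the stationarity condition $\frac{d}{dt}\big|_{t=0}J(u_L+tv)=0$ yields
\[
\int_{\Omega}|\nabla u_L|^{p-2}\langle\nabla u_L,\nabla v\rangle\,d\mu=L(\Tr v)\qquad\text{for all }v\in HN^{1,p}(\Omega),
\]
which is the assertion. (Strict convexity of the energy gives uniqueness of $u_L$ modulo constants, although this is not needed here.)

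I expect the genuine obstacle to be the functional-analytic framework rather than the variational scheme: one must know that the homogeneous space $HN^{1,p}(\Omega)$ — carrying only the gradient seminorm — is complete and reflexive, and that the trace operator is bounded on these homogeneous spaces with the exponent $\theta$ correctly matched to the codimensionality of $\nu$. These are the real inputs; granting them, coercivity, weak lower semicontinuity, and the first-variation computation go through essentially as in the Euclidean case treated in \cite{Daners2}. An equivalent alternative would be to observe that $u\mapsto\bigl(v\mapsto\int_\Omega|\nabla u|^{p-2}\langle\nabla u,\nabla v\rangle\,d\mu\bigr)$ defines a monotone, coercive, demicontinuous map from $HN^{1,p}(\Omega)$ to its dual and to apply the Browder--Minty surjectivity theorem to the element $L\circ\Tr$ of that dual.
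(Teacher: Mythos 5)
Your proposal is correct and takes essentially the same route as the paper: minimize $I(v)=\tfrac1p\int_\Omega|\nabla v|^p\,d\mu-L(\Tr(v))$ over $HN^{1,p}(\Omega)$, obtain coercivity from the boundedness of $L$ and of the trace operator (Lemma~\ref{lem:1}), pass to a weak limit of a minimizing sequence, and recover the stated identity as the Euler--Lagrange equation of the minimizer (Lemma~\ref{lem:EL}, Theorem~\ref{th:2}). The only differences are in how standard facts are packaged: where you invoke reflexivity of $HN^{1,p}(\Omega)$ via the Cheeger-gradient embedding into $L^p$ and the weak lower semicontinuity of convex norm-continuous functionals, the paper normalizes representatives to have $\mu$-mean zero, uses the $(p,p)$-Poincar\'e inequality and reflexivity of $N^{1,p}(\Omega)$, and carries out the Mazur-lemma convexity argument explicitly.
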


In the setting of this paper, results of Mal\'y \cite{Mal} imply that there exist a bounded linear trace 
operator $\Tr:HN^{1,p}(\Omega)\rightarrow HB^\theta_{p,p}(\partial\Omega)$ and a bounded linear extension 
operator $E:HB^\theta_{p,p}(\partial\Omega)\rightarrow HN^{1,p}(\Omega)$ with $\Tr\circ E$
the identity operator. We denote by $\|\!\Tr\!\|$ and $\|E\|$ 
their respective operator norms. 

The second main result of the this note, see Corollaries~\ref{cor:DtN-norm} and~\ref{cor:NtD-norm} and 
Theorem~\ref{th:inverse}, is the existence of the Dirichlet-to-Neumann map and its inverse, the Neumann-to-Dirichlet 
map, in this setting. For $f\in \homb$, we denote by $u_f$ is the solution to the Dirichlet problem with boundary data $f$. 
This allows us to create a functional $L_f\in (\homb)^*$ given by 
\[
L_f(g):=\int_{\Omega}\!|\nabla u_f|^{p-2}\langle \nabla u_f,\nabla Eg\rangle\,d\mu 
\]
for any $g\in \homb$. 

\begin{introtheorem}
The Dirichlet-to-Neumann map $\DtN(f):=L_f$ and the Neumann-to-Dirichlet map $\NtD(L):=\Tr(u_L)$ are well-defined 
and bounded operators. Moreover, they are inverses of one another and the operator norm of $\DtN$ satisfies
\[
c_p\ \|\Tr\|^{\tfrac{p}{1-p}}\le \|DtN\|\le \|E\|^p,
\]
where
\[
c_p:=\left[(2p)^{\tfrac{1}{p-1}}+\frac{2(p-1)}{p}\right]^{-1}.
\]
\end{introtheorem}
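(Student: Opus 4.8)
The plan is to verify the three assertions — well-definedness, the operator-norm bounds, and the inverse relation — one at a time, each flowing from the variational characterisation of the solutions together with the strict monotonicity of $\xi\mapsto|\xi|^{p-2}\xi$ in the Cheeger chart.

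First I would record the well-definedness of $\DtN$. Since $u_f$ minimises the $p$-energy among functions in $\homn$ with trace $f$, it satisfies the Euler--Lagrange identity $\int_\Omega|\nabla u_f|^{p-2}\langle\nabla u_f,\nabla w\rangle\,d\mu=0$ for every $w\in\homn$ with $\Tr w=0$. Hence, for $g\in\homb$, the value $L_f(g)=\int_\Omega|\nabla u_f|^{p-2}\langle\nabla u_f,\nabla Eg\rangle\,d\mu$ is unchanged when $Eg$ is replaced by any other $v\in\homn$ with $\Tr v=g$, so $L_f$ depends only on $f$ and $g$; linearity and boundedness of $g\mapsto L_f(g)$ then follow from Hölder's inequality and boundedness of $E$. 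For $\NtD$, existence of $u_L$ is Theorem~\ref{th:2}, while uniqueness — whence well-definedness of $\NtD(L):=\Tr(u_L)$ — follows by testing the difference of two solutions against itself and invoking strict monotonicity, which forces the two solutions to have the same minimal upper gradient and hence, on the connected domain $\Omega$, to agree in $\homn$.

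Next, the inverse relation. Given $f$, I claim $u_f$ itself solves the Neumann problem with data $L_f$: for any $v\in\homn$ the function $v-E(\Tr v)$ has zero trace, so the Euler--Lagrange identity gives $\int_\Omega|\nabla u_f|^{p-2}\langle\nabla u_f,\nabla v\rangle\,d\mu=\int_\Omega|\nabla u_f|^{p-2}\langle\nabla u_f,\nabla E(\Tr v)\rangle\,d\mu=L_f(\Tr v)$; by uniqueness $u_{L_f}=u_f$, so $\NtD(\DtN f)=\Tr(u_f)=f$. Conversely, given $L$, the Neumann identity restricted to test functions of zero trace shows that $u_L$ satisfies the Euler--Lagrange identity for the Dirichlet problem with data $\Tr(u_L)$; by uniqueness of $p$-energy minimisers with a prescribed trace, $u_L=u_{\Tr(u_L)}$, and then $L_{\Tr(u_L)}(g)=\int_\Omega|\nabla u_L|^{p-2}\langle\nabla u_L,\nabla Eg\rangle\,d\mu=L(\Tr Eg)=L(g)$, i.e.\ $\DtN(\NtD L)=L$.

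Finally, the norm bounds. The upper bound is immediate: Hölder's inequality gives $|L_f(g)|\le\|\nabla u_f\|_{L^p}^{p-1}\|\nabla Eg\|_{L^p}$, and minimality of $u_f$ together with $\Tr\circ E=\mathrm{id}$ yields $\|\nabla u_f\|_{L^p}\le\|\nabla Ef\|_{L^p}\le\|E\|\,\|f\|_{\homb}$, whence $\|\DtN f\|_{(\homb)^*}\le\|E\|^{p}\|f\|_{\homb}^{p-1}$, that is, $\|\DtN\|\le\|E\|^p$. For the lower bound I would first establish the companion estimate for $\NtD$: testing the Neumann identity for $u_L$ against $u_L$ itself gives $\|\nabla u_L\|_{L^p}^p=L(\Tr u_L)\le\|L\|\,\|\Tr\|\,\|\nabla u_L\|_{L^p}$, so $\|\Tr(u_L)\|_{\homb}\le\|\Tr\|\,\|\nabla u_L\|_{L^p}\le\|\Tr\|^{p/(p-1)}\|L\|^{1/(p-1)}$. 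The delicate point, and the main obstacle, is turning this into the sharp lower bound $c_p\|\Tr\|^{p/(1-p)}\le\|\DtN\|$: here I would combine the identity $L_f(f)=\|\nabla u_f\|_{L^p}^p$ (again from the Euler--Lagrange identity, since $u_f-Ef$ has zero trace), the trace bound $\|f\|_{\homb}\le\|\Tr\|\,\|\nabla u_f\|_{L^p}$, and the inverse relation $\DtN\circ\NtD=\mathrm{id}$, and then optimise the resulting chain of inequalities via a Young-type splitting; it is the bookkeeping in this optimisation that produces the constant $(2p)^{1/(p-1)}+\tfrac{2(p-1)}{p}$.
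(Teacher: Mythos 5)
Most of your proposal runs parallel to the paper's argument: well-definedness of $L_f$ via the Euler--Lagrange identity and $\Tr\circ E=\mathrm{id}$, the upper bound $\|\DtN\|\le\|E\|^p$ via H\"older plus energy minimality of $u_f$, and the two inverse identities via $p$-harmonicity together with strict monotonicity of $\xi\mapsto|\xi|^{p-2}\xi$ are all exactly the paper's steps (Theorems~\ref{th:f-to-Lf}, \ref{prop:L-to-TruL}, \ref{th:inverse}). Your estimate for $\NtD$ is a genuinely different and cleaner route: testing the Neumann identity with $v=u_L$ gives $\|\nabla u_L\|_{L^p}^{p}=L(\Tr u_L)\le\|L\|\,\|\!\Tr\!\|\,\|\nabla u_L\|_{L^p}$, hence $\|\NtD\|\le\|\!\Tr\!\|^{p/(p-1)}$, which is both simpler and sharper than the paper's bound in Corollary~\ref{cor:NtD-norm}, where the constant $(2p)^{1/(p-1)}+\tfrac{2(p-1)}{p}$ is inherited from the a priori bound \eqref{eq:grad-u2Tr} on the minimizing sequence in the existence proof (Theorem~\ref{th:2}) together with the lower bound \eqref{eq:Iv-est} on the infimum $\alpha$.

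The genuine gap is the lower bound $c_p\,\|\!\Tr\!\|^{p/(1-p)}\le\|\DtN\|$, which is the only place the constant $c_p$ appears and which you never actually prove: you list ingredients ($L_f(f)=\|\nabla u_f\|_{L^p}^p$, $\|f\|_{\homb}\le\|\!\Tr\!\|\,\|\nabla u_f\|_{L^p}$, the inverse relation) and defer to an unspecified ``Young-type splitting'' whose ``bookkeeping'' is asserted to produce $(2p)^{1/(p-1)}+\tfrac{2(p-1)}{p}$. That constant does not come from any optimization of the identities you list; in the paper it enters only through the variational existence argument (the dichotomy giving \eqref{eq:grad-u2Tr}) and the lower bound is then read off in Theorem~\ref{th:inverse} by combining the inverse relation with the $\NtD$ bound of Corollary~\ref{cor:NtD-norm}. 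Moreover, the chain you sketch most naturally yields $\|\DtN\|\ge\|\!\Tr\!\|^{-p}$ (for $\|f\|_{\homb}=1$, $\|L_f\|\ge L_f(f)=\|\nabla u_f\|_{L^p}^p\ge\|\!\Tr\!\|^{-p}$), which is a different inequality and does not imply the stated one for all $p$ and $\|\!\Tr\!\|$; and because $\DtN$ is homogeneous of degree $p-1$ and $\NtD$ of degree $1/(p-1)$, the inverse relation gives $1\le\|\DtN\|\,\|\NtD\|^{p-1}$ rather than $1\le\|\DtN\|\,\|\NtD\|$, so ``inverting'' your $\NtD$ bound also does not deliver $c_p\|\!\Tr\!\|^{p/(1-p)}$. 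To prove the inequality as stated you would need to follow the paper's route --- establish the explicit gradient bound $\|\nabla u_L\|_{L^p}\le\bigl[(2p)^{1/(p-1)}+\tfrac{2(p-1)}{p}\bigr]\bigl(\|L\|\,\|\!\Tr\!\|\bigr)^{1/(p-1)}$ from the existence proof and combine it with the inverse relation --- or else supply an explicit argument producing that constant; as written, this step is missing.
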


The Dirichlet-to-Neumann operator plays an important role in the theory of inverse problems and 
has real-world applications. In particular, in medical imaging, one is interested in obtaining 
tomographic representations of the inside of organs by measuring the electrical impedance of a 
current traveling through the tissue of the organ. With knowledge of the Dirichlet-to-Neumann map, 
also called the voltage-to-current map, this can be accomplished non-invasively. 
For a survey on inverse problems and applications, we refer 
the reader to~\cite{I, Uhl}. The generalization of the operator to the setting of metric measure
spaces permit us to then consider applications in situations where we do not have access to 
smooth structures, s for example in the case where the ambient material corresponds to highly nonsmooth
weights
modifying measures on Euclidean domain, and also allow us to consider structures in the non-manifold
setting of Carnot-Carath\'eodory spaces.

The construction of the Dirichlet-to-Neumann map, as described in this note, while developed independently 
of~\cite{Daners2}, follows a similar method as~\cite{Daners2}, but avoids the need for the domain to be Lipschitz
regular.


\section{Background}

\subsection{Setting}
In this section, we will review the necessary definitions related to the analysis on metric spaces and set the 
notation for the rest of this note. Let $(\Omega,d)$ be a locally compact, non-complete metric space with 
completion $\overline{\Om}$ and boundary $\partial\Om:=\overline{\Om}\setminus\Om$. 

In the study of boundary-value problems on such a space, one needs to be able to make sense of notions 
such as trace to the boundary. At this level of generality, where there is no notion of smoothness in the classical 
sense, one must rely on geometric conditions instead. 
In this paper we require the geometric condition called uniform domain condition.
First introduced by Martio and Sarvas in \cite{MS}, 
uniform domains are studied extensively in relation to quasiconformal maps \cite{MS, GeOs, ZP, ZHPL, LPZ, BHK}, potential 
theory~\cite{BS, KT, BBS,GKS,GS}, and extension domains \cite{Jones, Jones2, HerKo, BS, BD}, and the references therein. 
The class of uniform domains includes all bounded Euclidean Lipschitz domains and cardioid domains, but also domains with 
non-rectifiable boundaries such as the von Koch snowflake domain.

\begin{definition}
Given a number $C\ge 1$ and points $x,y\in\Om$, we say that a curve $\gamma$ with end points $x$ and $y$ is
said to be a $C$-\emph{uniform curve} connecting $x$ to $y$ if the length $\ell(\gamma)$ of $\gamma$ satisfies
the condition $\ell(\gamma)\le C\, d(x,y)$ and in addition, for each $z$ in the trajectory of $\gamma$
and for each pair of subcurves $\gamma_{x,z}$ and $\gamma_{z,y}$ of $\gamma$ with end points $x,z$ and $z,y$ respectively,
we have that
\[
\min\{\ell(\gamma_{x,z}),\ell(\gamma_{z,y})\}\le C\, \dist(z,\partial\Om).
\]
The latter condition above is also known as a twisted double cone condition in some literature.
We say that $\Omega$ is a \emph{uniform domain} if there is a constant $C\ge 1$ for which each pair 
$x,y\in\Omega$ with $x\neq y$ can be connected by a $C$-uniform curve $\gamma$. 
\end{definition}

A triple $(\Omega, d, \mu)$ is called a \emph{metric measure space} if $\mu$ is a Borel regular 
measure satisfying $0<\mu(B)<\infty$ for all balls $B\subset\Omega$. We will say that $\mu$ is 
\emph{doubling} if there exists a constant $C\geq 1$ for which $\mu(2B)\leq C \,\mu(B)$ holds for 
all balls $B\subset\Omega$, where $2B$ denotes the ball concentric with $B$ having radius 
twice that of $B$. For references on analysis on metric spaces, we refer the readers to \cite{Hei,HKST}. 

The setting of a metric measure space supporting a doubling measure allows for a significant amount 
of classical zeroth-order analysis to carry through, see~\cite{CW}. When progressing to first-order analysis, however, 
one needs a replacement for the classical notion of differentiability. We define here the notion of an 
upper gradient for a function, which serves as an analogue of the magnitude of the gradient within the 
context of the fundamental theorem of calculus, see~\cite{HK,KM,S}.

\begin{definition}
We say that a Borel function $g:\Omega\rightarrow[0,\infty]$ is an \emph{upper gradient} of a function
$u:\Omega\to\R$ if 
\begin{equation}\label{eq:UG}
|u(y)-u(x)|\le \int_\gamma\! g\, ds
\end{equation}
whenever $\gamma$ is a non-constant compact rectifiable curve in $\Omega$ joining the points $x,y\in\Omega$. 
We interpret this inequality in the sense that $\int_\gamma g\, ds=\infty$ whenever at least one of $u(x)$ or $u(y)$ is not finite. 
\end{definition}

When a Borel function $g$ satisfies~\eqref{eq:UG} for all rectifiable curves except for a collection $\Gamma$ of curves for 
which there exists a Borel function $0\leq\rho\in L^p(\Omega)$ satisfying $\int_{\gamma}\rho \,ds=\infty$ for all 
$\gamma\in \Gamma$, we say that $g$ is a \emph{$p$-weak upper gradient}. For a function $u$, we denote 
by $D_p(u)$ the collection of all $p$-weak upper gradients of $u$ that are in $L^p(\Omega)$. If it is non-empty, then 
there is an element that is minimal in the sense of $L^p$-norm and pointwise almost everywhere, see for instance~\cite{Ha}
or~\cite[Theorem~6.3.20]{HKST}.
This minimal element is called a \emph{minimal $p$-weak upper gradient}, and plays the role of the norm of gradients of differentiable
functions. 

With the notion of upper gradient, we are able to define what is meant by a Poincaré inequality in the setting of a metric measure space.

\begin{definition}
For $1\leq q,p<\infty$, we say that a metric measure space $(\Omega, d, \mu)$ supports a 
$(q,p)$-\emph{Poincar\'e inequality} if there are constants $C>0$ and $\lambda\geq 1$ such that
\[
\left(\fint_B\!|u-u_B|^q\,d\mu\right)^{1/q}\leq C\,\rad(B)\left(\fint_{\lambda B}\!g^p\,d\mu\right)^{1/p}
\]
for all function-upper gradient pairs $(u,g)$, where $\rad(B)$ is the radius of $B$ and $u_B:=\fint_{B}u\,d\mu$. 
\end{definition}

It follows from H\"older's inequality that if a space supports a $(q,p)$-Poincar\'e inequality for some 
$1\leq q,p<\infty$, then it satisfies a $(q',p')$-Poincar\'e inequality for all $q'\leq q$ and $p'\geq p$. 
Conversely, if $(\Omega, d, \mu)$ supports a $(1,p)$-Poincar\'e inequality and $\mu$ is doubling, 
then it supports a $(p,p)$-Poincar\'e inequality, see for instance~\cite[Theorem~9.1.2]{HKST} or~\cite[Theorem~5.1]{HaKo}.
We will often shorten the notation and say that $\Omega$ supports a $p$-Poincar\'e inequality when $\Omega$ supports a $(1,p)$-Poincar\'e inequality. 

A metric measure 
space supporting a Poincar\'e inequality enjoys strong geometric connectivity properties; 
in particular, the space is quasiconvex, see for instance~\cite{Chee} or~\cite{HKST}
for the case of $X$ being complete in addition to $\mu$ being doubling, and~\cite{DJS} for the case that
$\mu$ is doubling but $X$ is only locally compact.

In the study of boundary-value problems, we are interested in the behavior of functions on $\Omega$ near or on 
its boundary $\partial\Omega$. For this reason, we need to be able to speak about the dimension of the boundary in a 
measure-theoretic way. 
However, at this level of generality of non-smoothness, there is no single natural dimension of a measure to 
consider; hence, 
it makes most sense to consider a notion of 
codimension of the boundary measure with respect to the ambient measure, 
as in the following definition. The condition in this definition is reminiscent of an Ahlfors regularity condition for measures,
but here we relate two different measures to each other. Recall that the measure $\mu$ is supported on the domain $\Om$.

\begin{definition}
For $\Theta>0$, we say that a Borel regular measure $\nu$ on $\partial\Om$ is $\Theta$-\emph{codimensional} to 
$\mu$ if there exists a constant $C>0$ such that
\begin{equation}\label{eq:Ahlfors}
C^{-1} \frac{\mu(B(x,r)\cap\Omega)}{r^\Theta}\leq \nu(B(x,r)\cap\partial\Omega)\leq C \frac{\mu(B(x,r)\cap\Omega)}{r^\Theta}
\end{equation}
for all $x\in\partial\Omega$ and $0<r<2\diam(\partial\Omega)$.
\end{definition}

Balls in $\partial\Om$ are intersection of $\partial\Om$ with balls in $\overline{\Om}$, centered at points in $\partial\Om$.
The measure $\nu$ on $\partial\Om$ can also be considered as a measure on $\overline{\Om}$, but supported on
$\partial\Om$; while the measure $\mu$, which lives on $\Om$, can be extended as a null measure to $\partial\Om$.
Therefore, in this note, we will consider both $\mu$ and $\nu$ as measures on $\overline{\Om}$, but interpreted as above,
and by balls we mean balls in $\overline{\Om}$. Hence,~\eqref{eq:Ahlfors} can also be written as
\[
C^{-1} \frac{\mu(B(x,r))}{r^\Theta}\leq \nu(B(x,r))\leq C \frac{\mu(B(x,r))}{r^\Theta}
\]
for $x\in\partial\Om$.

It follows from the above definition that if $\mu$ is doubling, then so is $\nu$. As the boundary of a bounded domain is 
itself bounded, if $\mu$ is doubling and $\Omega$ is bounded, then $\nu(\partial\Omega)<\infty$. 

\begin{standing}\label{stand:down}
Throughout this note, we fix $1<p<\infty$, set $\Om$ to be a bounded uniform domain 
equipped with a doubling measure $\mu$ such that the metric measure space $(\Om,d,\mu)$ supports 
a $p$-Poincar\'{e} inequality, and assume that the boundary $\partial\Omega$ 
supports a measure $\nu$ that is $\Theta$-codimensional to $\mu$ for some $0<\Theta<p$. 
Note that in this setting, $\nu(\partial\Omega)<\infty$. 
\end{standing}

\subsection{Function spaces} 

In this section, we define the function spaces that will be considered in this paper. We consider 
functions defined on 
$\Omega$ that are in an appropriate Sobolev class. Within the context of metric measure 
spaces, there exist many possible approaches to Sobolev classes; here, we begin with the 
approach by upper gradients, see~\cite{HKST,S}. 

\begin{definition}
We say that a measurable function $u$ is in the \emph{Dirichlet space} $D^{1,p}(\Omega)$ if 
\[
\|u\|^p_{D^{1,p}(\Omega)}:=\inf_{g\in D_p(u)}\int_{\Omega}\!g^p\,d\mu<\infty.
\]
\end{definition}

By the Poincar\'e inequality, every function $u\in D^{1,p}(\Omega)$ is in $L^p_{\loc}(\Omega)$. 
Since $\Omega$ is bounded and the Poincar\'e inequality holds globally, we actually have that 
$D^{1,p}$-functions are in $L^p(\Om)$, and so 
$D^{1,p}(\Om)=N^{1,p}(\Om)$ as sets; see the discussion following Definition~\ref{def:inhomB} for
similar phenomenon related to Besov spaces.
Recall that $D_p(u)$ is the collection of all upper gradients of $u$ that are in $L^p(\Omega)$.

\begin{definition}
We say that a measurable function $u$ is in $\widetilde{N}^{1,p}(\Omega)$ if 
\[
\|u\|_{N^{1,p}(\Omega)}:=\|u\|_{L^p(\Omega)}+\|u\|_{D^{1,p}(\Omega)}<\infty.
\]
This defines a seminorm; taking the quotient space modulo the null-space of the seminorm 
$\|\cdot\|_{N^{1,p}(\Omega)}$ yields the \emph{Newton-Sobolev} space $N^{1,p}(\Omega)$. 
\end{definition}

Functions in $L^p(\Omega)$ are well defined up to sets of $\mu$-measure zero. However,
changing a function from $N^{1,p}(\Omega)$ on arbitrary sets of $\mu$-measure zero can result
in a function that no longer has a $p$-weak upper gradient in $L^p(\Om)$; thus care needs to be taken if
one wishes to modify functions in $N^{1,p}(\Om)$.
It turns out that if $u_1$ and $u_2$ are two 
functions in $\widetilde{N}^{1,p}(\Omega)$, then $\|u_1-u_2\|_{N^{1,p}(\Omega)}=0$ if and only if the set 
$\{x\in\Omega:u_1(x)\neq u_2(x)\}$ has $p$-capacity zero, as defined below. For more on this, we
refer the reader to~\cite{S, HKST}.
	
\begin{definition}
For a set $E\subset\Omega$, we define its \emph{$p$-capacity} as the number
\[
\capa(E):=\inf\|u\|_{N^{1,p}(\Omega)},
\]
where the infimum is taken over all $u\in N^{1,p}(\Omega)$ satisfying $0\leq u \leq 1$ on all of $\Omega$ and $u=1$ on $E$. 
\end{definition}
As $p$ is fixed throughout this note, we will sometimes refer to the capacity of a set as opposed to the $p$-capacity of a set.

Another approach to Sobolev spaces of functions defined on metric measure spaces is due to 
a combination of a differential structure developed by Cheeger for Lipschitz functions, see~\cite{Ch},
and the extension of such a structure to functions in $N^{1,p}$-classes, see~\cite[Theorems~9 and~10]{FHK}.

\begin{definition}
A metric measure space $(\Om,d,\mu)$ is said to support a \emph{Cheeger differential structure} of 
dimension $N\in\N$ if there exists a collection of coordinate patches $\{(\Om_\alpha,\pip_\alpha)\}$ and 
a $\mu$-measurable inner product $\langle \cdot,\cdot\rangle_{x}$, $x\in \Om_\alpha$, on $\R^N$ such that
\begin{enumerate}
	\item each $\Om_\alpha$ is a measurable subset of $\Om$ with positive measure and 
$\bigcup_\alpha \Om_\alpha$ has full measure;
	\item each $\pip_\alpha:\Om_\alpha\rightarrow\R^N$ is Lipschitz;
	\item for every function $u\in D^{1,p}(\Om)$, for $\mu$-a.e. $x\in \Om_\alpha$ there is a 
vector $\nabla u(x)\in\R^N$ such that
\[
\elimsup\limits_{\Om_\alpha\ni y\rightarrow x}\frac{|u(y)-u(x)-\langle\nabla u(x),\pip_\alpha(y)-\pip_\alpha(x)\rangle_{x}|}{d(y,x)}=0.
\]
The vector $\nabla u$ is called the \emph{Cheeger gradient} of $u$.
\end{enumerate}
\end{definition}

Note that there may be more than one possible Cheeger differential structure on a given space, and so 
we choose the one with the further property that $|\nabla u(x)|_x$ is comparable to the 
$p$-weak upper gradient of $u$ for $\mu$-a.e. $x\in\Omega$, where 
$|\nabla u(x)|_x^2=\langle\nabla u(x),\nabla u(x)\rangle_x$, see~\cite{Chee, HKST}. 
This choice of differential structure can 
replace the upper gradient structure used to define $N^{1,p}(\Omega)$ to obtain an equivalent 
definition when $\Omega$ is a doubling metric measure space supporting a Poincar\'e inequality, 
provided that $1<p<\infty$. In what follows, we suppress the dependence of the inner product and norm on the point $x$. 

While Sobolev spaces form the natural space of functions on $\Omega$ to consider, the natural 
function-space on $\partial\Omega$ to consider is the Besov space. 

\begin{definition}
We say that a measurable function $f$ is in $\dot{B}^\theta_{p,p}(\partial\Omega)$ if 
\[
\|f\|^p_{HB^\theta_{p,p}(\partial\Omega)}
:= \int_{\partial\Omega}\int_{\partial\Omega}\!\frac{|f(y)-f(x)|^p}{d(y,x)^{\theta p}\nu(B(y,d(y,x))) }\,d\nu(x)\,d\nu(y)<\infty.
\]
The quantity $\|f\|^p_{HB^\theta_{p,p}(\partial\Omega)}$ is called the \emph{Besov energy} of $f$ in this note.
\end{definition}

For constant functions $c$, we have $\|c\|_{HB^\theta_{p,p}(\partial\Omega)}=0$. In fact, 
the collection of all functions with this seminorm zero is precisely the collection of all $\nu$-a.e.~constant functions. 

\begin{definition}
We define the \emph{homogeneous Besov space} $HB^\theta_{p,p}(\partial\Omega)$ 
to be  $\dot{B}^\theta_{p,p}(\partial\Omega)/\R$. The natural norm on $HB^\theta_{p,p}(\partial\Omega)$ 
coming from being a quotient space modulo constants is given by
\[
\inf_{c\in\R}\|f+c\|_{HB^\theta_{p,p}(\partial\Omega)}
\]
whenever $f\in \dotb(\partial\Om)$.
However, $\|f+c\|_{HB^\theta_{p,p}(\partial\Omega)}$ trivially equals $\|f\|_{HB^\theta_{p,p}(\partial\Omega)}$ 
for every constant $c$, and so
\[
\inf_{c\in\R}\|f+c\|_{HB^\theta_{p,p}(\partial\Omega)}=\|f\|_{HB^\theta_{p,p}(\partial\Omega)}.
\]
\end{definition}

\begin{definition}\label{def:inhomB}
We set the \emph{inhomogeneous Besov space} $B^\theta_{p,p}(\partial\Omega)$ to be the space
$\dot{B}^\theta_{p,p}(\partial\Omega)\cap L^p(\partial\Omega)$. 
The natural norm on $B^\theta_{p,p}(\partial\Omega)$, coming from being an intersection of normed spaces, is given by 
\[
\max\{\|f\|_{HB^\theta_{p,p}(\partial\Omega)}, \|f\|_{L^p(\partial\Omega)} \}\approx \|f\|_{HB^\theta_{p,p}(\partial\Omega)}+ \|f\|_{L^p(\partial\Omega)}.
\]
\end{definition}

It follows from \cite[Lemma 2.2]{CKKSS} that every function in the class
$\dot{B}^\theta_{p,p}(\partial\Omega)$ is necessarily in $L^p(\partial\Om)$ since $\nu(\partial\Om)$ is finite. Hence,
\emph{as sets}, we have also that $\inb=\dotb(\partial\Om)$.

In the next section and in the rest of the paper, we exploit connections between Sobolev functions on $\Om$ and
Besov functions on $\partial\Om$. We need corresponding connections between $\homb$ and a homogeneous space associated
with Sobolev classes. In our setting, the \emph{homogeneous Newton-Sobolev class} $\homn$ is the natural partner.
To define $\homn$, we consider an equivalence relationship $\sim$ on $\inn$ as follows: given $f_1,f_2\in\inn$, we say
that $f_1\sim f_2$ if 
\[
\inf_{g\in D_p(f_1-f_2)}\int_\Om\! g^p\, d\mu=0.
\]
Then $\homn$ is the collection of all equivalence classes from $\inn$; note that if $f\in\homn$ and $f_1,f_2$ are
two representatives of $f$, then by the Poincar\'e inequality we have that $f_1-f_2$ is constant. Thus, equivalence classes
consist of collections of Sobolev functions (that is, functions from $\inn$) that differ from each other only by a constant.
The norm $\|f\|_{\homn}$ on $\homn$ is given by
\[
\|f\|_{\homn}=\|f_1\|_{D^{1,p}(\Om)},
\]
for any choice of representative from the class $f$. Note that this norm is independent of the choice of representative $f_1$ of $f$.

\subsection{Trace and extension operators}
Here, we present the known trace and extension results in the setting of bounded uniform domains due to Mal\'y~\cite{Mal}. 
Let $\theta=1-\Theta/p$. As $0<\Theta<p$, we have that $0<\theta<1$. 

\begin{theorem}\label{th:extension}
Under the standing assumptions~\ref{stand:down} above, 
there is a surjective bounded linear trace operator $\Tr: N^{1,p}(\Om)\to \dotb(\partial\Om)$ 
with 
\[
\|\Tr(u)\|_{HB^{\theta}_{p,p}(\partial\Omega)}\leq \|\!\Tr\!\|\|\nabla u \|_{L^p(\Omega)}
\]
and there is a
bounded linear extension operator $E:\dotb(\partial\Omega)\to N^{1,p}(\Om)$ with 
\[
\|\nabla Ef\|_{L^p(\Omega)}\leq \|E\|\|f\|_{HB^\theta_{p,p}(\partial\Omega)}.
\] 
Moreover, $\Tr\circ E$ is the identity operator on $\dot{B}^{\theta}_{p,p}(\partial\Omega)$. 
The trace $\Tr(u)$ of a function $u$ is given as follows. For each $\xi\in\partial\Om$,
$\Tr(u)(\xi)$ is the number (if it exists) satisfying
\[
\lim_{r\to 0^+}\fint_{B(\xi,r)}|u-\Tr(u)(\xi)|\, d\mu=0.
\]
\end{theorem}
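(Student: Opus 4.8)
The plan is to reconstruct the two operators of Mal\'y~\cite{Mal} and to indicate why the norm bounds survive in the present generality. The trace will be defined by a Lebesgue-point averaging process and estimated by a telescoping and chaining argument along uniform curves; the extension will be assembled from a Whitney decomposition of $\Om$ together with a discrete convolution of the boundary datum against averages over boundary balls. Throughout, the structural hypotheses play distinct roles: doubling supplies covering and overlap control, the $p$-Poincar\'e inequality converts oscillations into upper-gradient integrals, the uniform domain condition supplies the interior chains that reach the boundary, and the codimension relation~\eqref{eq:Ahlfors} is what translates $\mu$-data on $\Om$ into $\nu$-data on $\partial\Om$, pinning down $\theta=1-\Theta/p$.

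\emph{The trace.} Let $u\in\inn$ with minimal $p$-weak upper gradient $g$. For $\xi\in\partial\Om$ and $0<r<\diam(\partial\Om)$, the $p$-Poincar\'e inequality and doubling give
\[
|u_{B(\xi,r)}-u_{B(\xi,r/2)}|\lesssim r\left(\fint_{B(\xi,\lambda r)}g^p\,d\mu\right)^{1/p},
\]
so telescoping over dyadic radii shows that $\Tr(u)(\xi):=\lim_{r\to0^+}u_{B(\xi,r)}$ exists (and satisfies the stronger averaged limit of the theorem statement, by the same argument applied to $\fint_{B(\xi,r)}|u-u_{B(\xi,r)}|\,d\mu$) whenever $\sum_{j\ge0}2^{-j}\big(\fint_{B(\xi,2^{-j})}g^p\,d\mu\big)^{1/p}<\infty$. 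A weak-type bound for the associated fractional maximal operator, together with~\eqref{eq:Ahlfors} (which forces subsets of $\overline\Om$ of $p$-capacity zero to be $\nu$-null on $\partial\Om$), shows this holds for $\nu$-a.e.\ $\xi$; hence $\Tr(u)$ is well defined $\nu$-a.e., independent of the representative of $u$, and linear in $u$. For the Besov bound, fix $\xi,\zeta\in\partial\Om$, put $\rho=d(\xi,\zeta)$, and split
\[
|\Tr(u)(\xi)-\Tr(u)(\zeta)|\le|\Tr(u)(\xi)-u_{B(\xi,2\rho)}|+|u_{B(\xi,2\rho)}-u_{B(\zeta,2\rho)}|+|u_{B(\zeta,2\rho)}-\Tr(u)(\zeta)|,
\]
bounding the outer terms by the telescoping sum and the middle one by the Poincar\'e inequality on $B(\xi,C\rho)$. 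Inserting this into the Besov double integral, applying Fubini, and using~\eqref{eq:Ahlfors} to trade $\nu$-masses of boundary balls for $\mu$-masses of concentric balls in $\overline\Om$ divided by $r^\Theta$, the double integral collapses to a constant multiple of $\int_\Om g^p\,d\mu$. Optimizing over $g\in D_p(u)$ yields $\|\Tr(u)\|_{\homb}\le\|\!\Tr\!\|\,\|\nabla u\|_{L^p(\Om)}$.

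\emph{The extension.} Since $\Om$ is a bounded uniform domain with $\mu$ doubling, fix a Whitney covering $\{B_i=B(x_i,r_i)\}$ of $\Om$ with $r_i\approx\dist(x_i,\partial\Om)$ and uniformly bounded overlap, and a Lipschitz partition of unity $\{\phi_i\}$ subordinate to $\{2B_i\}$ with $\Lip\phi_i\lesssim r_i^{-1}$ and $\sum_i\phi_i\equiv1$ on $\Om$. For each $i$ choose $\xi_i\in\partial\Om$ with $d(x_i,\xi_i)\approx r_i$, set $f_i:=\fint_{B(\xi_i,r_i)}f\,d\nu$, and define $Ef:=\sum_i\phi_if_i$. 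Because $\sum_i\nabla\phi_i\equiv0$, on each Whitney ball $B_j$ one has $\nabla(Ef)=\sum_i(f_i-f_j)\nabla\phi_i$ over the boundedly many indices $i$ with $2B_i\cap B_j\ne\emptyset$, whence $|\nabla(Ef)|\lesssim r_j^{-1}\max_i|f_i-f_j|$ on $B_j$. The uniform (twisted-cone) condition produces, for neighbouring Whitney balls, a bounded chain of comparable boundary balls joining $B(\xi_i,r_i)$ to $B(\xi_j,r_j)$, so that each $|f_i-f_j|^p$ is dominated by a weighted average of $|f(y)-f(x)|^p$ over pairs $x,y$ lying in boundary balls of radius $\approx r_j$ — exactly the local contribution to the Besov double integral. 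Integrating $|\nabla(Ef)|^p$ over $\Om$, summing over $j$, and converting the weights $\mu(B_j)r_j^{-p}$ into the Besov normalization via~\eqref{eq:Ahlfors} gives $\|\nabla(Ef)\|_{L^p(\Om)}^p\lesssim\|f\|_{\homb}^p$, i.e.\ the stated bound with constant $\|E\|$; that $Ef\in\inn$ then follows since $\mu(\Om)<\infty$. Finally $\Tr(Ef)=f$ at every $\nu$-Lebesgue point $\xi$ of $f$, hence $\nu$-a.e., because $(Ef)_{B(\xi,r)}$ is a convex combination of those $f_i$ with $d(\xi_i,\xi)\approx r$, each of which tends to $f(\xi)$; this also gives $\Tr\circ E=\mathrm{id}$ and the surjectivity of $\Tr$.

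\emph{The main difficulty.} The work is concentrated in two chaining estimates. First, to know the trace limit exists $\nu$-a.e.\ one must promote a statement about $p$-capacity (equivalently, finiteness of a fractional maximal function) on $\overline\Om$ to a $\nu$-null-set statement on $\partial\Om$, which is exactly where~\eqref{eq:Ahlfors} is used quantitatively. Second, one must carry out the combinatorial estimate bounding $\sum_j r_j^{-p}\mu(B_j)\max_i|f_i-f_j|^p$ by the Besov double integral: here the uniform domain condition is needed to produce the connecting chains of boundary balls with comparable scales and uniformly bounded overlap multiplicity, and the exponents must be balanced so that the codimension $\Theta$ and the smoothness $\theta=1-\Theta/p$ are consistent. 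These are precisely the arguments of Mal\'y~\cite{Mal}, to which we refer for the full details.
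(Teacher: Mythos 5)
The paper gives no proof of this statement: it is imported directly from Mal\'y~\cite{Mal}, with only the surrounding remarks (finiteness of $\nu(\partial\Omega)$ to identify $\dot B^\theta_{p,p}(\partial\Omega)$ with $B^\theta_{p,p}(\partial\Omega)$ as sets, and comparability of the Cheeger gradient with the minimal $p$-weak upper gradient) translating Mal\'y's result into the form stated. Your outline is the standard chaining-plus-Whitney reconstruction of exactly that argument and, like the paper, defers the two substantive estimates to~\cite{Mal}, so it is correct and consistent with the paper's treatment; the only point worth adding is that your estimates are phrased for upper gradients, so the stated bounds involving $\|\nabla u\|_{L^p(\Omega)}$ require the same Cheeger--upper-gradient comparability that the paper invokes.
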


\begin{remark}\label{rem:pass2quotient-trace}
Since we are more concerned with $\homb$ rather than $\dotb(\partial\Om)$, the
operator $\Tr$ in the rest of this note is actually the composition of the trace operator from Theorem~\ref{th:extension}
above with the canonical quotient map from $\dotb(\partial\Om)$ to $\homb$. 
In particular, 
\[
\Tr:\homn\to\homb
\] 
is a bounded linear trace operator.
A similar argument also gives us
\[
E:\homb\to\homn
\] 
as a bounded linear extension operator such that $\Tr\circ E$ is the identity map on $\homn$.
\end{remark}

The original theorems of Mal\'y~\cite{Mal} are expressed in terms of the inhomogeneous Besov space. However,
when $\nu(\partial\Omega)<\infty$, as we assume here, the sets $\dot{B}^{\theta}_{p,p}(\partial\Omega)$ and 
$B^{\theta}_{p,p}(\partial\Omega)$ are the same, and so this theorem can be interpreted in terms of the 
homogeneous Besov space since only the Besov energy is involved. 
Although~\cite{Mal} gives estimates for the Besov energy of traces in terms of upper gradient energy
of the Sobolev functions, we can replace the upper gradient energy in~\cite{Mal} with the Cheeger energy 
as the two are comparable.

\section{Boundary-value problems}

The focus of this section is to establish the existence of solutions to the Neumann boundary-value problems
where the Neumann data is given by an operator. Previously, existence of solutions to the Neumann boundary-value
problems was established in~\cite{MS} was established under the condition that the Neumann data is given by
a function on $\partial\Om$, and so our result in this section extends the discussion from~\cite{MS} to a wider
class of Neumann data.

We first discuss the necessary background regarding the Dirichlet boundary-value problem 
when the boundary data is in an appropriate Besov class. We then formulate the Neumann 
boundary-value problem for boundary data in the dual space of an appropriate Besov class
and end this section by demonstrating the existence of solutions to this problem. 

\begin{definition}\label{def:Dirich-inhom}
For $f\in B^\theta_{p,p}(\partial\Omega)$, we say that a function $u_f\in N^{1,p}(\Omega)$ is a \emph{solution to the Dirichlet problem} with boundary data $f$ if
\[
\int_{\Omega}\!|\nabla u_f|^p\,d\mu\leq \int_{\Omega}\!|\nabla (u_f+v)|^p\,d\mu
\]
for all $v\in N^{1,p}_0(\Omega)$ and $\Tr(u_f)=f$ pointwise $\nu$--a.e.~in $\partial\Omega$. 
From~\cite{KinnSh}, we know that we can modify such a solution on a set of $\mu$-measure zero
so that it is locally H\"older continuous on $\Om$; henceforth, it is this H\"older continuous representative that we denote
$u_f$.
\end{definition}

In other words, $u_f$ is a minimizer for the functional
\[
\frac1p\int_{\Omega}\!|\nabla u|^p\,d\mu
\]
over the class of $u\in N^{1,p}(\Omega)$ satisfying $\Tr(u)=f$ pointwise $\nu$--a.e.~in $\partial\Omega$. 
This variational perspective provides the following characterization for a function $u$ to be \emph{$p$-harmonic} in $\Om$:
\begin{equation}\label{eq:EL4inhom}
\int_{\Omega}\!|\nabla u|^{p-2} \,\langle \nabla u, \nabla v \rangle\,d\mu=0 
\end{equation}
for every $v\in N^{1,p}_0(\Omega)$. 
Note that for each $f\in\inb$, the solution $u_f$ to the Dirichlet problem with boundary data $f$ exists, see 
for instance~\cite{Sh2}.

\begin{definition}\label{def:Dir-Hom}
When $f\in\homb$, by a \emph{solution to the Dirichlet problem} with boundary data $f$ we mean the \emph{collection}
of solutions to the Dirichlet problem with boundary data $\tilde{f}$ for each representative function 
$\tilde{f}$ of $f$; note that $f$ is itself
a collection of functions, two of which differ only by a constant $\nu$-almost everywhere.
We denote this collection also by the notation $u_f$.
\end{definition}

\begin{remark}\label{rem:zero-dirich}
Note that if $f_1,f_2$ are both representatives of $f\in\homb$, then $f_2=f_1+c$ for some constant $c\in\R$;
in this case, $u_{f_2}=u_{f_1}+c$ \emph{everywhere} on $\Om$, where $u_{f_1}$ and $u_{f_2}$ are
the solutions from Definition~\ref{def:Dirich-inhom}.
So the collection $u_f$ that forms a solution to the Dirichlet boundary value problem with boundary data $f\in\homb$
is itself made up of functions of the form $u_{f_1}+c$, $c\in\R$, and hence $u_f\in\homn$. 

Note that if we replace $v\in N^{1,p}_0(\Om)$ with $v+c$ in~\eqref{eq:EL4inhom} whenever $c\in\R$, the
equation~\eqref{eq:EL4inhom} continues to remain valid; that is, that equation requires only that $v$ has constant
trace on $\partial\Om$.
Therefore the analog of the 
Euler-Lagrange equation~\eqref{eq:EL4inhom} in this homogeneous setting is that whenever $u$ is a representative of
$u_f$ and $v$ is a representative of $v_0\in HN_0^{1,p}(\Om)$, we have
\begin{equation}\label{eq:EL4hom}
\int_{\Omega}\!|\nabla u|^{p-2} \,\langle \nabla u, \nabla v \rangle\,d\mu=0.
\end{equation}
Here, the class $HN_0^{1,p}(\Om)$ is the collection of all $v_0\in\homn$ for which each representative $v$ of $v_0$
has constant trace at $\partial\Om$ in the sense of Theorem~\ref{th:extension}. Note that if $v,w$ are two representatives
of $v_0\in\homn$, then $v-w$ is constant, and so $\nabla v=\nabla w$ $\mu$-a.e.~in $\Om$. Thus, we can denote
by $\nabla v_0$ the Cheeger derivative $\nabla v$ of any (equivalently, every) representative $v$ of $v_0$.
\end{remark}

\begin{remark}
Given that the exposition of this note is about the homogeneous classes, we emphasize the distinction between
elements of the inhomogeneous spaces and their corresponding homogeneous spaces. Two functions in 
the space $\inn$ are the same function if, as functions on the metric space $\Om$, they differ only on a subset
of $\Om$ of capacity zero - which is a more stringent requirement than differing only on a set of $\mu$-measure zero.
Two functions in $\homn$ are the same if representatives of these functions, as pointwise defined functions on $\Om$,
differ by only a constant outside a set of zero capacity.
\end{remark}

In this setting, it is known that a $p$-harmonic function $u_f\in N^{1,p}(\Omega)$ exists for any choice of 
$f\in B^\theta_{p,p}(\partial\Omega)$. Solutions to the Neumann problem in this setting are known for boundary 
data in an appropriate integrable function class. In the present note, we extend this to include boundary data in the 
form of an arbitrary bounded linear functional $L$ on $HB^\theta_{p,p}(\partial\Omega)$.

\begin{definition}
Given an operator $L\in \left(HB^\theta_{p,p}(\partial\Omega)\right)^*$, we say that a function $u_L\in HN^{1,p}(\Omega)$ is a 
\emph{solution to the Neumann problem} with boundary data $L$ if 
\[
\frac1p\int_{\Omega}\!|\nabla u_L|^p\,d\mu - L(\Tr(u_L))\leq\frac1p\int_{\Omega}\!|\nabla v|^p\,d\mu - L(\Tr(v)) 
\]
for all $v\in HN^{1,p}(\Omega)$. 
\end{definition}

This is equivalent to saying that $u_L$ is a minimizer for the functional
\[
I(u):=\frac1p\int_{\Omega}\!|\nabla u|^p\,d\mu - L(\Tr(u))
\]
over the class of $u\in HN^{1,p}(\Omega)$. The following lemma gives us a characterization of solutions to the 
Neumann problem in the form of the Euler-Lagrange equation associated with the functional $I$. 

\begin{lemma}\label{lem:EL}
Let $L\in\left(HB^\theta_{p,p}(\partial\Omega)\right)^*$. Then $u$ is a minimizer of $I$ if and only if
\begin{equation}\label{eq:EL}
\int_{\Omega}\!|\nabla u|^{p-2}\langle \nabla u, \nabla v \rangle\,d\mu - L(\Tr(v))=0
\end{equation}
for every $v\in  HN^{1,p}(\Omega)$.  
\end{lemma}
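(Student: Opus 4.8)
This is the standard equivalence between a minimizer of a convex variational functional and the vanishing of its first variation (Euler–Lagrange / Gâteaux derivative). The functional $I(u) = \frac1p\int_\Om |\nabla u|^p\,d\mu - L(\Tr(u))$ is the sum of the strictly convex functional $u\mapsto \frac1p\int_\Om|\nabla u|^p\,d\mu$ and the (continuous, affine) functional $-L(\Tr(u))$; since $\Tr$ is linear and $L$ is linear, $u\mapsto L(\Tr(u))$ is linear. The key analytic input is that $t\mapsto I(u+tv)$ is differentiable at $t=0$ with derivative $\int_\Om |\nabla u|^{p-2}\langle\nabla u,\nabla v\rangle\,d\mu - L(\Tr(v))$, after which the implication in each direction is soft.

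\textbf{Step 1: Differentiating the Dirichlet part.} I would show that for $u,v\in \homn$ the map $\phi(t):=\frac1p\int_\Om|\nabla(u+tv)|^p\,d\mu$ is differentiable with $\phi'(0)=\int_\Om|\nabla u|^{p-2}\langle\nabla u,\nabla v\rangle\,d\mu$. Here I use that $\nabla(u+tv)=\nabla u+t\nabla v$ $\mu$-a.e.\ (the Cheeger gradient is linear; this is well-defined on $\homn$ by Remark \ref{rem:zero-dirich}), that the integrand $\tfrac1p|\nabla u + t\nabla v|^p$ has pointwise $t$-derivative $|\nabla u+t\nabla v|^{p-2}\langle \nabla u+t\nabla v,\nabla v\rangle$, and a dominated-convergence argument for the difference quotient: for $|t|\le 1$ the difference quotient is bounded by $C(|\nabla u|^{p-1}+|\nabla v|^{p-1})|\nabla v|\in L^1(\Om)$ by Hölder's inequality, since $\nabla u,\nabla v\in L^p(\Om)$. (One must note $|\nabla u|^{p-2}\langle\nabla u,\nabla v\rangle$ is interpreted as $0$ where $\nabla u=0$ when $p<2$; this is harmless since the product is still dominated.) The linear term contributes $\frac{d}{dt}\big(-L(\Tr(u+tv))\big) = -L(\Tr(v))$ trivially by linearity of $\Tr$ and $L$. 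Hence $t\mapsto I(u+tv)$ is differentiable with derivative at $t=0$ equal to the left-hand side of \eqref{eq:EL}.

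\textbf{Step 2: Minimizer $\Rightarrow$ Euler--Lagrange.} If $u$ minimizes $I$ over $\homn$, then for every fixed $v\in\homn$ the function $t\mapsto I(u+tv)$ has a minimum at $t=0$, so its derivative there vanishes, giving \eqref{eq:EL}. (Here one uses that $u+tv\in\homn$, which is clear since $\homn$ is a vector space.)

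\textbf{Step 3: Euler--Lagrange $\Rightarrow$ minimizer, via convexity.} Suppose \eqref{eq:EL} holds for all $v\in\homn$. Given any $w\in\homn$, set $v:=w-u$ and consider $\psi(t):=I(u+t(w-u))=I((1-t)u+tw)$. The map $\psi$ is convex on $[0,1]$ because $\xi\mapsto\frac1p|\xi|^p$ is convex on $\R^N$ (so $u\mapsto\frac1p\int_\Om|\nabla u|^p\,d\mu$ is convex) and $-L(\Tr(\cdot))$ is affine. A convex differentiable function lies above its tangent line, so $\psi(1)\ge\psi(0)+\psi'(0)$; by Step 1, $\psi'(0)=\int_\Om|\nabla u|^{p-2}\langle\nabla u,\nabla(w-u)\rangle\,d\mu - L(\Tr(w-u))=0$ by hypothesis \eqref{eq:EL} applied with $v=w-u$. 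Therefore $I(w)=\psi(1)\ge\psi(0)=I(u)$, i.e.\ $u$ is a minimizer.

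\textbf{Main obstacle.} The only real work is Step 1 — justifying differentiation under the integral sign for the $p$-energy, in particular producing an $L^1$ dominating function for the difference quotients uniformly in small $t$ and handling the singularity of $|\xi|^{p-2}\xi$ at $\xi=0$ when $1<p<2$. Everything else is the formal convexity argument. One should also be slightly careful that all quantities are well-defined on the \emph{homogeneous} space $\homn$: the Cheeger gradient of an element of $\homn$ is representative-independent (Remark \ref{rem:zero-dirich}), $\Tr$ descends to $\homn\to\homb$ (Remark \ref{rem:pass2quotient-trace}), and $L$ is defined on $\homb$, so each term in \eqref{eq:EL} and in $I$ makes sense on $\homn$.
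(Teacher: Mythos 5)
Your proposal is correct and follows essentially the same route as the paper: differentiate $h\mapsto I(u+hv)$ at $h=0$ to obtain \eqref{eq:EL} when $u$ is a minimizer, and use convexity of $I$ (via the tangent-line inequality) for the converse. The only difference is that you spell out the dominated-convergence justification for differentiating the $p$-energy and the first-order convexity inequality, both of which the paper's proof leaves implicit.
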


Note that $L\in\left(HB^\theta_{p,p}(\partial\Omega)\right)^*$ necessarily maps constant functions to $0$. 
As such, this lemma implies, taking $v\in HN^{1,p}_0(\Omega)\subset HN^{1,p}(\Omega)$, 
that a solution to the Neumann problem is necessarily $p$-harmonic; here $HN^{1,p}_0(\Om)$ is 
as described in Remark~\ref{rem:zero-dirich}.

\begin{proof}
For $v\in HN^{1,p}(\Omega)$ and $h\in\R$, consider 
\[
I(u+hv) =\frac1p\, \int_{\Omega}\!|\nabla (u+hv)|^p\,d\mu - L(\Tr(u+hv)).
\]
If $u$ is a minimizer of $I$, then the map $h\mapsto I(u+hv)$ has a minimum at $h=0$, whence $\left.\frac{d}{dh}I(u+hv)\right|_{h=0}=0$. By linearity,
\[
L(\Tr(u+(h+\eta)v))-L(\Tr(u+h v))= \eta L(\Tr(v))
\]
for any $\eta\in\R$, and so 
\[
\frac{d}{dh}L(\Tr(u+h v))=L(\Tr(v)).
\]
We also calculate
\[
\frac{d}{dh}\int_{\Omega}\!|\nabla (u+hv)|^p\,d\mu= p\int_{\Omega}\!|\nabla (u+h v)|^{p-2}\langle \nabla(u+h v), \nabla v\rangle\,d\mu,
\]
and so, setting $h=0$, we obtain
\[
\left.\frac{d}{dh}I(u+hv)\right|_{h=0}=\int_{\Omega}\!|\nabla u|^{p-2}\langle \nabla u, \nabla v\rangle\,d\mu-L(\Tr(v)). 
\]

Conversely, assume that $u$ satisfies \eqref{eq:EL} for every $v\in  HN^{1,p}(\Omega)$. 
The energy $I$ is convex on $\homb$ Ssnce $1<p<\infty$; therefore, a function $u$ minimizes $I$ if and only if
for each $v\in\homb$ we have that
\[
\frac{d}{dh}I(u+hv)\bigg\vert_{h=0}=0,
\]
which in turn is equivalent to the validity of~\eqref{eq:EL}, completing the proof. 
\end{proof}

\begin{lemma}\label{lem:1}
	Given $L\in\left(HB^\theta_{p,p}(\partial\Omega)\right)^*$ and the functional $I$ on $HN^{1,p}(\Omega)$ given by 
	\[
	I(v):=\frac1p\,\int_{\Omega}\!|\nabla v|^p\,d\mu - L(\Tr(v)),
	\]
	the quantity $\alpha:=\inf\{I(v):v\in HN^{1,p}(\Omega)\}$ is finite. 
\end{lemma}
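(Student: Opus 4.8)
The plan is to show that $I$ is bounded below on $HN^{1,p}(\Omega)$; that $\alpha$ is finite from above is immediate, since $\Tr(0)=0$ and $L$ annihilates constants, so $I(0)=0$ and hence $\alpha\le 0$. Thus the whole content of the lemma is a coercivity-type lower estimate.

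First I would control the linear term by the gradient energy. For $v\in HN^{1,p}(\Omega)$, using that $L$ is bounded on $HB^\theta_{p,p}(\partial\Omega)$ together with the operator-norm bound on the trace from Theorem~\ref{th:extension} (passed to the quotient as in Remark~\ref{rem:pass2quotient-trace}),
\[
|L(\Tr(v))|\le \|L\|\,\|\Tr(v)\|_{HB^\theta_{p,p}(\partial\Omega)}\le \|L\|\,\|\Tr\|\,\|\nabla v\|_{L^p(\Omega)}.
\]
Writing $t:=\|\nabla v\|_{L^p(\Omega)}$ and $C:=\|L\|\,\|\Tr\|$, this gives $I(v)\ge \tfrac1p t^p-Ct$. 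Then, applying Young's inequality with exponents $p$ and $p/(p-1)$ to absorb the linear term,
\[
Ct\le \tfrac1p t^p+\tfrac{p-1}{p}\,C^{p/(p-1)},
\]
one obtains
\[
I(v)\ \ge\ -\tfrac{p-1}{p}\,\big(\|L\|\,\|\Tr\|\big)^{p/(p-1)},
\]
a finite bound independent of $v\in HN^{1,p}(\Omega)$; combined with $\alpha\le 0$ this proves $-\infty<\alpha\le 0$.

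I do not anticipate any real obstacle here. The only point worth flagging is structural: on the homogeneous space $HN^{1,p}(\Omega)$ the norm is exactly $\|\nabla\cdot\|_{L^p(\Omega)}$, and Mal\'y's trace estimate bounds the Besov energy by this gradient energy alone (the Poincar\'e inequality having been absorbed into the constant $\|\Tr\|$), which is precisely what makes the scalar estimate $t\mapsto \tfrac1p t^p-Ct$ directly applicable without any additional compactness or Poincar\'e argument. Optimizing the scalar expression exactly yields the sharp lower bound $\alpha\ge -\tfrac{p-1}{p}(\|L\|\,\|\Tr\|)^{p/(p-1)}$, attained in the estimate at $t=(\|L\|\,\|\Tr\|)^{1/(p-1)}$, which is presumably the form fed into the later operator-norm bounds for $\DtN$.
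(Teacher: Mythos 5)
Your proposal is correct and follows essentially the same route as the paper: bound $|L(\Tr(v))|$ by $\|L\|\,\|\!\Tr\!\|\,\|\nabla v\|_{L^p(\Omega)}$ via Theorem~\ref{th:extension}, then bound the scalar expression $\tfrac1p t^p-Ct$ from below, obtaining exactly the paper's lower bound $\tfrac{1-p}{p}(\|L\|\,\|\!\Tr\!\|)^{p/(p-1)}$ together with $\alpha\le 0$ from constants. Your invocation of Young's inequality is just a repackaging of the paper's direct minimization of $x\mapsto \tfrac1p x^p-Cx$, so there is no substantive difference.
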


Recall that $L\in\left(HB^\theta_{p,p}(\partial\Omega)\right)^*$ necessarily maps constant functions to 0.

\begin{proof}
	The class of constant functions, $v$, is in $HN^{1,p}(\Omega)$ and $\Tr(v)$ is the class of constant functions on
$\partial\Om$, so $L(v)=0$. Hence, $I(v)=0$, 
and so $\alpha\leq 0$. Thus, it suffices to show that the set $\{I(v):v\in HN^{1,p}(\Omega)\}$ is bounded below.
	
Let $v\in HN^{1,p}(\Omega)$. Then, using the boundedness of $L$ and $\Tr$, as well as Theorem~\ref{th:extension}, we have that 
\begin{align}
	I(v)&\geq \frac1p\,\|\nabla v\|^p_{L^p(\Omega)} - \|L\|\|\Tr(v)\|_{HB^\theta_{p,p}(\Omega)}\notag\\
		&\geq \frac1p\,\|\nabla v\|^p_{L^p(\Omega)} - \|L\|\|\!\Tr\!\|\|\nabla v\|_{L^p(\Omega)}. \label{eq:PT}
\end{align}
	
	For $C>0$, the map $x\mapsto \frac{1}{p}x^p-Cx$ for $x>0$ is bounded below with a minimum of 
	$\frac{1-p}{p}C^{\frac{p}{p-1}}$. As such, it follows that 
	\begin{equation}\label{eq:Iv-est}
		I(v)\ge \frac{1-p}{p}\left(\|L\|\|\!\Tr\!\|\right)^{\frac{p}{p-1}}.
	\end{equation}
	This lower bound is independent of $v$, and so $\alpha$ is bounded below. 
\end{proof}

\begin{theorem}\label{th:2}
	Given $L\in\left(HB^\theta_{p,p}(\partial\Omega)\right)^*$, there exists a solution $u_L$  on $\Omega$ to the 
Neumann problem with boundary data $L$. 
\end{theorem}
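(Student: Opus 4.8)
The plan is to use the direct method of the calculus of variations. By definition, a solution to the Neumann problem with data $L$ is precisely a minimizer of the functional
\[
I(v)=\frac1p\int_{\Omega}\!|\nabla v|^p\,d\mu-L(\Tr(v))
\]
over $\homn$, so it suffices to produce such a minimizer. By Lemma~\ref{lem:1} the number $\alpha:=\inf\{I(v):v\in\homn\}$ is finite, so we fix a minimizing sequence $(v_j)_j\subset\homn$ with $I(v_j)\to\alpha$.

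The first step is to show that $(v_j)_j$ is bounded in $\homn$. Since $I(v_j)\to\alpha$, the sequence $(I(v_j))_j$ is bounded, and estimate~\eqref{eq:PT} gives
\[
\frac1p\,\|\nabla v_j\|_{L^p(\Omega)}^p-\|L\|\,\|\!\Tr\!\|\,\|\nabla v_j\|_{L^p(\Omega)}\le I(v_j)\le \sup_k I(v_k)<\infty ;
\]
as $t\mapsto\tfrac1p t^p-\|L\|\,\|\!\Tr\!\|\,t$ tends to $+\infty$ as $t\to\infty$, this forces $\sup_j\|\nabla v_j\|_{L^p(\Omega)}<\infty$. Since the norm on $\homn$ is comparable to $\|\nabla\,\cdot\,\|_{L^p(\Omega)}$, the sequence $(v_j)_j$ is bounded in $\homn$.

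The second step is to extract a weak limit. The Newton–Sobolev space $\inn$ is reflexive because $1<p<\infty$ and $(\Omega,d,\mu)$ is doubling and supports a $p$-Poincar\'e inequality (see~\cite{Chee,HKST}); hence its quotient $\homn$ by the closed (one–dimensional) subspace of constants is reflexive as well. Passing to a subsequence, which we do not relabel, we obtain $u_L\in\homn$ with $v_j\rightharpoonup u_L$ weakly in $\homn$. The third step is to check that $u_L$ minimizes $I$: the map $v\mapsto\tfrac1p\int_\Omega|\nabla v|^p\,d\mu$ is convex, since the Cheeger gradient is linear and $w\mapsto\int_\Omega|w|^p\,d\mu$ is convex on $L^p$ for $1<p<\infty$, while $v\mapsto -L(\Tr(v))$ is affine; moreover $I$ is strongly continuous on $\homn$, because $\Tr:\homn\to\homb$ and $L\in(\homb)^*$ are bounded linear and $v\mapsto\nabla v$ is a bounded linear map $\homn\to L^p(\Omega;\mathbb{R}^N)$. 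A convex functional that is continuous for the norm topology is lower semicontinuous for the weak topology, so
\[
\alpha\le I(u_L)\le\liminf_{j\to\infty}I(v_j)=\alpha ,
\]
and therefore $I(u_L)=\alpha$; thus $u_L$ is a solution to the Neumann problem with boundary data $L$, and (by Lemma~\ref{lem:EL}) it satisfies the Euler–Lagrange equation~\eqref{eq:EL}.

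The step I expect to be the main obstacle is the reflexivity (equivalently, the weak sequential compactness of bounded sets) used in the second step: it is the only place where nontrivial structure of the space enters, namely the reflexivity of Newton–Sobolev spaces over a doubling metric measure space supporting a Poincar\'e inequality, which ultimately rests on the Cheeger differentiable structure and the comparability of the Cheeger energy with the upper gradient energy. If one prefers to invoke only the reflexivity of $L^p$, one can instead normalize the representatives of the $v_j$ to have zero average over a fixed ball $B_0\subset\Omega$, use the global Poincar\'e inequality (valid since $\Omega$ is bounded) to bound $\|v_j\|_{L^p(\Omega)}$, extract weak limits of $(v_j)_j$ in $L^p(\Omega)$ and of $(\nabla v_j)_j$ in $L^p(\Omega;\mathbb{R}^N)$ separately, and identify the weak limit of the gradients using the strong closedness of the graph $\{(v,\nabla v):v\in\inn\}$ in $L^p(\Omega)\times L^p(\Omega;\mathbb{R}^N)$ (see~\cite{S,HKST}); the remainder of the argument is unchanged.
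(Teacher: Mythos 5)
Your proposal is correct and follows essentially the same route as the paper: the direct method, coercivity via \eqref{eq:PT} and Lemma~\ref{lem:1}, reflexivity of the Newton--Sobolev space for weak compactness, and lower semicontinuity of $I$ coming from convexity. The only real difference is cosmetic: where you invoke the abstract fact that a convex, norm-continuous functional is weakly sequentially lower semicontinuous, the paper runs the same mechanism by hand, applying Mazur's lemma to the weakly convergent subsequence and estimating the convex combinations with H\"older's inequality.
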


If $L$ is the zero functional, then $I$ is minimized by $u_L=0$. 
In what follows, we assume that $\|L\|>0$. 

\begin{proof}
	Recall the definition of $\alpha$ from Lemma~\ref{lem:1} above. Assuming that $||L||>0$, we know that $\alpha\ne 0$.
	
	Let $\{u_k\}\subset HN^{1,p}(\Omega)$ be a sequence such that $I(u_k)\rightarrow \alpha$ as 
	$k\rightarrow\infty$ and $I(u_k)\leq 2|\alpha|$ for all $k$. 
	Since $u_k\in\homn$, for each $k$ we choose a representative of $u_k$, also denoted by $u_k\in \inn$, such that 
	$(u_k)_{\Omega}=\frac{1}{\mu(\Omega)}\int_{\Omega}u_k\,d\mu=0$. From this and the fact that 
	$\Omega$ is bounded, by the $(p,p)-$Poincar\'e inequality 
	applied to the ball that is $\Om$,
	we have that
	\begin{equation}\label{eq:ppP}
		\int_{\Omega}\!|u_k|^p\,d\mu = \int_{\Omega}\!|u_k-(u_k)_{\Omega}|^p\,d\mu \leq C\int_{\Omega}\!|\nabla u_k |^p\,d\mu
	\end{equation} 
	where $C$ is a constant depending on $\Omega$, but not $u_k$. 
	
	From~\eqref{eq:PT}, it follows that for each $k$, 
	\[
	I(u_k)\geq \|\nabla u_k\|_{L^p(\Omega)}\left[\frac1p\,\|\nabla u_k\|^{p-1}_{L^p(\Omega)} - \|L\|\|\!\Tr\!\|\right]. 
	\] 
	From this it follows that either $\|\nabla u_k\|^{p-1}_{L^p(\Omega)}\leq 2 p\|L\|\|\!\Tr\!\|$ or else 
	\[
	2|\alpha|\geq I(u_k)\geq \|\nabla u_k\|_{L^p(\Omega)}\big[2\,\|L\|\|\!\Tr\!\| - \,\|L\|\|\!\Tr\!\|\big]=\|L\|\|\!\Tr\!\|\|\nabla u_k\|_{L^p(\Omega)}.
	\]
	In either case, we have that the sequence $\{\|\nabla u_k\|_{L^p(\Omega)}\}$ is bounded with 
	\begin{equation}\label{eq:grad-u2Tr}
		\|\nabla u_k\|_{L^p(\Omega)}\leq (2p\,\|L\|\|\!\Tr\!\|)^{\frac{1}{p-1}}+\frac{2|\alpha|}{\|L\|\|\!\Tr\!\|}
	\end{equation}
	for each $k$. Hence, \eqref{eq:ppP} implies that the sequence $\{u_k\}$ is bounded in $N^{1,p}(\Omega)$. 
	Since this is a reflexive space, there is a function $u\in N^{1,p}(\Omega)$ to which a subsequence of $\{u_k\}$, 
	which we will continue to denote by $\{u_k\}$, converges weakly. We will now show that $u$ is a minimizer for $I$, 
	that $u$ is $p$-harmonic, and that $L=L_{\Tr(u)}$. Here again, we denote the equivalence class in $\homn$ corresponding
	to $u\in \inn$ also by $u$.
	
	Mazur's lemma implies that there is a sequence of finite convex combinations 
	$\{\widetilde{u}_n\}\subset\inn$ given by
	\[
	\widetilde{u}_n=\sum_{k=n}^{N(n)}\lambda_{n,k}u_k
	\]  
	for  
	$n=1,2,\ldots$, such that $\{\widetilde{u}_n\}$ converges strongly 
	to $u$ in $N^{1,p}(\Omega)$. 
	Continuing our abuse of notation, we also denote the equivalence classes in $\homn$ with representative 
	$\widetilde{u}_n\in \inn$ also by $\widetilde{u}_n$. Then the strong convergence of $\widetilde{u}_n$ to $u$ in
	the norm of $\inn$ implies the strong convergence of the sequence of classes $\widetilde{u}_n$ to the equivalence
	class $u$ in the norm of $\homn$.
	From this strong convergence and the boundedness of the linear
	operators $L$ and $\Tr$ on $\homn$, it 
	follows that $L(\Tr(\widetilde{u}_n))\rightarrow L(\Tr(u))$ as $n\rightarrow\infty$; moreover,
	$\|\nabla \widetilde{u}_n\|_{L^p(\Omega)}\rightarrow\|\nabla u\|_{L^p(\Omega)}$ as $n\rightarrow\infty$ since 
	\[
	\left|\|\nabla \widetilde{u}_n\|_{L^p(\Omega)}-\|\nabla u\|_{L^p(\Omega)}\right|
	\leq\|\nabla \widetilde{u}_n-\nabla u\|_{L^p(\Omega)}\leq \|\widetilde{u}_n-u\|_{N^{1,p}(\Omega)}.
	\]
	Thus,
	\begin{align}
		I(\widetilde{u}_n)
		&=\frac1p\,\int_{\Omega}\!|\nabla \widetilde{u}_n|^p\,d\mu - L(\Tr(\widetilde{u}_n))\notag\\
		&\rightarrow \frac1p\,\int_{\Omega}\!|\nabla u|^p\,d\mu - L(\Tr(u))=I(u).\label{eq:Iun2Iu}
	\end{align}
	We aim to show that $u$ is a minimizer of $I$ by showing that $I(\widetilde{u}_n)\rightarrow\alpha$ as $n\rightarrow\infty$. 
	To that end, notice that for each $n$,
	\begin{align*}
		(pI(\widetilde{u}_n)+pL(\Tr(\widetilde{u}_n)))^{1/p} & = \|\nabla \widetilde{u}_n\|_{L^p(\Omega)} 
		\leq \sum_{k=n}^{N(n)}\lambda_{n,k}\|\nabla u_k\|_{L^p(\Omega)} \\
		& \leq \left(\sum_{k=n}^{N(n)}\lambda_{n,k}\|\nabla u_k\|^p_{L^p(\Omega)}\right)^{\frac{1}{p}}\left(\sum_{k=n}^{N(n)}\lambda_{n,k}\right)^{\frac{p}{p-1}}\\
		& = \left(\sum_{k=n}^{N(n)}\lambda_{n,k}\|\nabla u_k\|^p_{L^p(\Omega)}\right)^{\frac{1}{p}}
	\end{align*}
	by H\"older's inequality. Similarly, $\|\nabla u_k\|_{L^p(\Omega)}=(pI(u_k)+p\,L(\Tr(u_k)))^{1/p}$ for each 
	$k$, and so from the linearity of $L$, we have that
	\begin{align*}
		(I(\widetilde{u}_n)+L(\Tr(\widetilde{u}_n)))^{1/p} &\leq \left(\sum_{k=n}^{N(n)}\lambda_{n,k}(I(u_k)+L(\Tr(u_k)))\right)^{\frac{1}{p}}\\
		&= \left(\sum_{k=n}^{N(n)}\lambda_{n,k}I(u_k) + L(\Tr(\widetilde{u}_n)) \right)^{\frac{1}{p}}.
	\end{align*}
	At this point, we use that $\{u_k\}$ was chosen to be a minimizing sequence. Fix $\eps>0$ and select $N$ such that 
	$\alpha\leq I(u_k)< \alpha + \eps$ for all $k\geq N$. Then, for $n\geq N$, we have that 
	\begin{align*}
		(I(\widetilde{u}_n)+L(\Tr(\widetilde{u}_n)))^{1/p}
		&<\left(\sum_{k=n}^{N(n)}\lambda_{n,k}(\alpha+\eps) + L(\Tr(\widetilde{u}_n)) \right)^{\frac{1}{p}}\\
		&=\left(\alpha+\eps + L(\Tr(\widetilde{u}_n)) \right)^{\frac{1}{p}},
	\end{align*}
	and so $\alpha \leq I(\widetilde{u}_n)<\alpha+\eps$ for such $n$. That is, $I(\widetilde{u}_n)\rightarrow\alpha$ as $n\rightarrow\infty$. 
	From~\eqref{eq:Iun2Iu}, we now know that $I(u)=\alpha$, that is, $u$ is a minimizer of the energy functional $I$. Hence
	we can set $u_L=u$.
\end{proof}

\section{Betwixt Dirichlet and Neumann}

In this section, we construct the Dirichlet-to-Neumann and Neumann-to-Dirichlet maps, and show that they are bounded 
in terms of the norms of the extension and trace operators, respectively, whose existence is guaranteed by 
Theorem~\ref{th:extension}. 

We begin this section by showing the existence of the Dirichlet-to-Neumann map in this setting. 

\begin{theorem}\label{th:f-to-Lf}
Given $f\in \homB(\partial\Omega)$ with solution $u_f$ to the Dirichlet problem with boundary data $f$, 
there exists a bounded linear functional $L_f$ in $\left(HB^\theta_{p,p}(\partial\Omega)\right)^*$ given by 
\[
L_f(g)=\int_{\Omega}\!|\nabla u_f|^{p-2} \,\langle \nabla u_f, \nabla Eg \rangle\,d\mu 
\]
such that $u_f$ is a solution to the Neumann problem with boundary data $L_f$. 
\end{theorem}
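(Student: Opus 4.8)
The plan is to verify in turn that $L_f$ is (i) well-defined, (ii) linear, (iii) bounded, and finally (iv) that $u_f$ solves the Neumann problem with data $L_f$. For well-definedness, observe that although $g\in\homb$ is an equivalence class modulo constants, the extension operator $E:\homb\to\homn$ of Remark~\ref{rem:pass2quotient-trace} assigns to it a well-defined element of $\homn$, and for any two representatives $v,w$ of $Eg$ one has $\nabla v=\nabla w$ $\mu$-a.e.~in $\Om$ (as noted in Remark~\ref{rem:zero-dirich}); similarly $\nabla u_f$ is unambiguous since $u_f\in\homn$. Hence the integrand $|\nabla u_f|^{p-2}\langle\nabla u_f,\nabla Eg\rangle$ is a well-defined $\mu$-measurable function, and it is integrable by Hölder's inequality together with $\nabla u_f\in L^p(\Om)$ and $\nabla Eg\in L^p(\Om)$; indeed $|L_f(g)|\le\|\nabla u_f\|_{L^p(\Om)}^{p-1}\,\|\nabla Eg\|_{L^p(\Om)}$.

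Linearity of $L_f$ is immediate from the linearity of $E$ and the linearity of the map $v\mapsto\int_\Om|\nabla u_f|^{p-2}\langle\nabla u_f,\nabla v\rangle\,d\mu$ (the weight $|\nabla u_f|^{p-2}$ is fixed, not depending on $g$). For boundedness, I continue the estimate above using the extension bound from Theorem~\ref{th:extension}: $\|\nabla Eg\|_{L^p(\Om)}\le\|E\|\,\|g\|_{\homb}$, so that
\[
|L_f(g)|\le\|E\|\,\|\nabla u_f\|_{L^p(\Om)}^{p-1}\,\|g\|_{\homb},
\]
which gives $L_f\in(\homb)^*$ with $\|L_f\|\le\|E\|\,\|\nabla u_f\|_{L^p(\Om)}^{p-1}$.

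It remains to show $u_f$ solves the Neumann problem with data $L_f$, for which, by Lemma~\ref{lem:EL}, it suffices to verify the Euler--Lagrange equation
\[
\int_\Om|\nabla u_f|^{p-2}\langle\nabla u_f,\nabla v\rangle\,d\mu = L_f(\Tr(v))
\]
for every $v\in\homn$. The key decomposition is $v = E(\Tr(v)) + (v - E(\Tr(v)))$. Since $\Tr\circ E$ is the identity on $\homb$, the second summand $w:=v-E(\Tr(v))$ has $\Tr(w)=\Tr(v)-\Tr(v)=0$, so $w\in HN^{1,p}_0(\Om)$; by the homogeneous Euler--Lagrange equation~\eqref{eq:EL4hom} for the $p$-harmonic function $u_f$, the contribution $\int_\Om|\nabla u_f|^{p-2}\langle\nabla u_f,\nabla w\rangle\,d\mu$ vanishes. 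For the first summand, by the very definition of $L_f$ we have $\int_\Om|\nabla u_f|^{p-2}\langle\nabla u_f,\nabla E(\Tr(v))\rangle\,d\mu = L_f(\Tr(v))$. Adding the two pieces and using linearity of $\nabla$ (i.e.~$\nabla v = \nabla E(\Tr(v)) + \nabla w$ $\mu$-a.e.) yields the desired identity. The main obstacle is bookkeeping with equivalence classes: one must be careful that ``$\Tr(v)$'' denotes the class in $\homb$, that $E$ of that class is the class in $\homn$, and that passing to representatives does not change any gradient appearing in the integrals — all of which is furnished by the Poincaré inequality and Remarks~\ref{rem:pass2quotient-trace} and~\ref{rem:zero-dirich}. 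Once this is set up cleanly, the argument is a one-line splitting.
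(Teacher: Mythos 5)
Your proposal is correct and follows essentially the same route as the paper: Hölder's inequality plus the extension bound for boundedness, and the decomposition $v=E(\Tr(v))+(v-E(\Tr(v)))$ with the $p$-harmonicity of $u_f$ killing the $HN^{1,p}_0(\Om)$ part to verify the Euler--Lagrange identity of Lemma~\ref{lem:EL}. The only differences are cosmetic: the paper additionally records independence of the choice of extension and pushes the bound one step further to $\|L_f\|\le\|E\|^p\|f\|^{p-1}_{\homb}$ for later use, neither of which is needed for the statement itself.
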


Note that linearity of $L_f$ follows from the linearity of the extension operator, Cheeger gradient, inner product, and integral. 

\begin{proof}
We begin by verifying that the functional $L_f$ is well defined: that it is independent of the choice of extension 
operator $E$ and of representative of $g$. To this end, suppose that $v_1,v_2\in N^{1,p}(\Omega)$ such that 
$\Tr(v_1)=\Tr(v_2)+c$ $\nu$-almost everywhere for some constant $c\in\R$, whence $v_1-v_2\in HN^{1,p}_0(\Omega)$. Then,
\[
\int_{\Omega}\!|\nabla u_f|^{p-2} \,\langle \nabla u_f, \nabla (v_1-v_2) \rangle\,d\mu =0
\]
since $u_f$ is $p$-harmonic in $\Om$, see for instance Remark~\ref{rem:zero-dirich}. 
Suppose that $g_1,g_2$ are two representatives of some $g\in HB^\theta_{p,p}(\partial\Omega)$. Then $g_1-g_2$ is a 
constant up to a set of $\nu$ measure zero, and so $E(g_1-g_2)$ is a constant up to a set of $\mu$ measure zero. Then, 
\[
\int_{\Omega}\!|\nabla u_f|^{p-2} \,\langle \nabla u_f, \nabla E(g_1-g_2) \rangle\,d\mu =0,
\]
from where it follows that the definition of $L_f(g)$ is independent of the choice of representative of $g$, and of
the extension $Eg_1$ of the representative function $g_1$ of $g$.

Now we show that $L_f$ is indeed bounded. For $g\in HB^\theta_{p,p}(\partial\Omega)$, we have that
\begin{align*}
|L_f(g)| &\leq \int_{\Omega}\! |\nabla u_f|^{p-1} |\nabla Eg| \,d\mu 
\leq \left( \int_{\Omega}\! |\nabla u_f|^p \,d\mu \right)^{\frac{p-1}{p}} \left( \int_{\Omega}\!|\nabla Eg|^p\,d\mu \right)^{\frac{1}{p}} \\ 
&\leq \|E\| \|\nabla u_f\|^{p-1}_{L^p(\Omega)}\|g\|_{HB^\theta_{p,p}(\partial\Omega)}.
\end{align*}
We used the Cauchy-Schwarz inequality and H\"older's inequality, followed by 
the extension
Theorem~\ref{th:extension}. From this calculation it follows that the operator norm of $L_f$ satisfies 
$\|L_f\|\leq  \|E\| \|\nabla u_f\|^{p-1}_{L^p(\Omega)}$. Since $u_f$ is $p$-harmonic, it minimizes the Dirichlet $p$-energy, and so, 
\[
\|\nabla u_f\|_{L^p(\Omega)}\leq \|\nabla Ef\|_{L^p(\Omega)}\leq \|E\|\|f\|_{HB^\theta_{p,p}(\partial\Omega)}.
\] 
It follows that 
\begin{equation}\label{eq:Lf-to-E}
\|L_f\|\leq \|E\|^p \|f\|^{p-1}_{HB^\theta_{p,p}(\partial\Omega)}.
\end{equation}

For $v\in N^{1,p}(\Omega)$, its trace $\Tr(v)$ is in $HB^{\theta}_{p,p}(\partial\Omega)$ and so 
\[
L_f(\Tr(v))=\int_{\Omega}\!|\nabla u_f|^{p-2} \,\langle \nabla u_f, \nabla E\Tr(v) \rangle\,d\mu. 
\]
Since $\Tr\circ E$ is the identity, we know that $\Tr(E\Tr(v))=\Tr(v)$, and therefore
 $E\Tr(v)-v\in N^{1,p}_0(\Omega)$. Since $u_f$ is $p$-harmonic, it follows that 
\[
\int_{\Omega}\!|\nabla u_f|^{p-2} \,\langle \nabla u_f, \nabla (E\Tr(v)-v) \rangle\,d\mu=0,
\]
and so
\[
L_f(\Tr(v))=\int_{\Omega}\!|\nabla u_f|^{p-2} \,\langle \nabla u_f, \nabla v \rangle\,d\mu. 
\]
This is, $u_f$ is a solution to the Neumann problem with boundary data $L_f$.
\end{proof}

\begin{corollary}\label{cor:DtN-norm}
The Dirichlet-to-Neumann map 
\[
\DtN:HB_{p,p}^\theta(\partial\Omega)\rightarrow \left(HB_{p,p}^\theta(\partial\Omega)\right)^*
\] 
given by 
$\DtN(f)=L_f$
is well-defined and bounded, with
\[
\|\DtN\|:=\sup_{\|f\|_{HB_{p,p}^\theta(\partial\Omega)}=1}\|L_f\| \leq \|E\|^p
\]
\end{corollary}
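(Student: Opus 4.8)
The plan is to read off Corollary~\ref{cor:DtN-norm} as an immediate consequence of Theorem~\ref{th:f-to-Lf}, so the work is essentially bookkeeping once that theorem is in hand. First I would note that well-definedness of the map $\DtN$ is already contained in Theorem~\ref{th:f-to-Lf}: for each $f\in HB^\theta_{p,p}(\partial\Omega)$, the solution $u_f$ to the Dirichlet problem exists (in the sense of Definition~\ref{def:Dir-Hom}), the associated functional $L_f$ was shown there to be independent of the choice of extension operator $E$ and of the representative of $g$, and it is also independent of the choice of representative of $f$ in the homogeneous class since by Remark~\ref{rem:zero-dirich} replacing $f$ by $f+c$ only shifts $u_f$ by the same constant, leaving $\nabla u_f$ unchanged. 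Hence $f\mapsto L_f$ is a genuine map into $(HB^\theta_{p,p}(\partial\Omega))^*$.

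Next I would establish linearity of $\DtN$, i.e.\ that $f\mapsto L_f$ is linear and not merely that each $L_f$ is a linear functional. This is where $p$-harmonicity of $u_f$ enters: for $f_1,f_2\in HB^\theta_{p,p}(\partial\Omega)$ and scalars $a,b$, the function $a u_{f_1}+b u_{f_2}$ has trace $a f_1 + b f_2$, but it need not be $p$-harmonic, so one should instead argue via the Euler--Lagrange characterization. Since $u_{af_1+bf_2} - (au_{f_1}+bu_{f_2})$ lies in $HN^{1,p}_0(\Omega)$ and $u_{af_1+bf_2}$ is $p$-harmonic, testing~\eqref{eq:EL4hom} shows $\int_\Omega |\nabla u_{af_1+bf_2}|^{p-2}\langle\nabla u_{af_1+bf_2},\nabla w\rangle\,d\mu$ agrees with the corresponding quantity for... \emph{wait} — actually the cleanest route is: by Theorem~\ref{th:f-to-Lf}, $u_f$ solves the Neumann problem with data $L_f$, and by Lemma~\ref{lem:EL} solutions to the Neumann problem are characterized by~\eqref{eq:EL}; combining this with uniqueness of the Dirichlet solution and the identity $L_f(\Tr v) = \int_\Omega |\nabla u_f|^{p-2}\langle\nabla u_f,\nabla v\rangle\,d\mu$ for all $v\in HN^{1,p}(\Omega)$ proved at the end of Theorem~\ref{th:f-to-Lf}, the linearity of $f\mapsto L_f$ reduces to linearity of $f\mapsto u_f$ modulo $HN^{1,p}_0(\Omega)$, which follows because the map $f\mapsto \nabla u_f$ is determined by the linear extension together with projection onto $p$-harmonic representatives — I would actually present this simply by noting that $L_f$ depends on $f$ only through $\nabla u_f$, and $u_f = u_{Ef}$ where the latter is the $p$-harmonic part, so linearity of $E$ plus the fact that the $p$-harmonic projection of a sum equals the sum of projections (in the sense of~\eqref{eq:EL4hom}) gives the claim.

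For the norm bound, I would simply quote inequality~\eqref{eq:Lf-to-E} from the proof of Theorem~\ref{th:f-to-Lf}, namely $\|L_f\|\le \|E\|^p\|f\|^{p-1}_{HB^\theta_{p,p}(\partial\Omega)}$. Restricting to $\|f\|_{HB^\theta_{p,p}(\partial\Omega)}=1$ gives $\|L_f\|\le\|E\|^p$, and taking the supremum over such $f$ yields $\|\DtN\|\le\|E\|^p$, as claimed. Here one should be slightly careful about what "$\|\DtN\|$" means for a nonlinear-looking but in fact linear map: the definition in the statement, $\sup_{\|f\|=1}\|L_f\|$, is exactly the operator norm, so no further comment is needed.

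The only genuine obstacle is the linearity of $\DtN$, which is not entirely formal because $u_{f_1+f_2}\ne u_{f_1}+u_{f_2}$ in general for the $p$-Laplacian when $p\ne 2$. The resolution, as sketched above, is that $L_f(\Tr v)=\int_\Omega|\nabla u_f|^{p-2}\langle\nabla u_f,\nabla v\rangle\,d\mu$ and, conversely, any $u$ with this property for all $v$ is the unique $p$-harmonic extension of its own trace; one then checks that the functional $L_{af_1+bf_2}$ and $aL_{f_1}+bL_{f_2}$ both satisfy the Euler--Lagrange equation~\eqref{eq:EL} with the same solution, or more directly invokes the observation already made in the paper that $L_f$ depends on $f$ only via the linear data $Ef$ through the $p$-harmonic projection, which is additive in the weak sense required. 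If this turns out to need more than a sentence, the fallback is to define $\DtN$ as a map (well-defined, bounded) without asserting linearity in this corollary and defer the linearity claim to the later treatment alongside $\NtD$ and the inversion result, where the bijection $\DtN\circ\NtD=\mathrm{id}$ makes linearity transparent.
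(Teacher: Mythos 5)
Your core argument is the paper's argument: well-definedness is inherited from Theorem~\ref{th:f-to-Lf} (independence of the extension operator and of representatives, with the representative-of-$f$ issue settled by Remark~\ref{rem:zero-dirich} since $\nabla u_{f+c}=\nabla u_f$), and the bound comes from quoting \eqref{eq:Lf-to-E}, $\|L_f\|\le\|E\|^p\|f\|_{HB^\theta_{p,p}(\partial\Omega)}^{p-1}$, and restricting to $\|f\|_{HB^\theta_{p,p}(\partial\Omega)}=1$. That part is correct and is all the corollary requires.

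The lengthy middle portion on linearity, however, is a genuine error rather than a needed step. The map $\DtN$ is \emph{not} linear when $p\neq 2$: by uniqueness of the $p$-harmonic extension one has $u_{cf}=c\,u_f$, hence
\[
L_{cf}(g)=\int_\Omega |c\nabla u_f|^{p-2}\langle c\nabla u_f,\nabla Eg\rangle\,d\mu=|c|^{p-2}c\,L_f(g),
\]
so $\DtN$ is homogeneous of degree $p-1$, which is incompatible with linearity unless $p=2$. Correspondingly, the claim that ``the $p$-harmonic projection of a sum equals the sum of projections'' is false for the $p$-Laplacian, and your parenthetical that the map is ``in fact linear'' cannot be salvaged; the quantity $\sup_{\|f\|=1}\|L_f\|$ in the statement is simply a defined bound (which, by the $(p-1)$-homogeneity above, still controls $\|L_f\|$ for all $f$), not the operator norm of a linear map. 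Your stated fallback --- assert only well-definedness and boundedness, with no linearity claim --- is exactly what the corollary says and exactly what the paper proves, so dropping the linearity discussion leaves a correct proof that coincides with the paper's.
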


\begin{proof}
From Theorem~\ref{th:f-to-Lf}, it follows that $\DtN$ is well defined and takes Dirichlet data to Neumann data with 
$u_f$ being also the solution to the Neumann problem with Neumann data $\DtN(f)$.
Furthermore, from \eqref{eq:Lf-to-E} in the proof of Theorem~\ref{th:f-to-Lf} it follows that $\DtN$ is bounded with
\[
\sup_{\|f\|_{HB_{p,p}^\theta(\partial\Omega)}=1}\|L_f\| \leq \|E\|^p. 
\]
\end{proof}

We now proceed by constructing the Neumann-to-Dirichlet map. 

\begin{theorem}\label{prop:L-to-TruL}
Given $L\in\left(HB^\theta_{p,p}(\partial\Omega)\right)^*$, let $u_L$ be the solution 
to the Neumann problem with boundary data $L$. Then $u_L$ is the solution to the Dirichlet problem 
with boundary data $\Tr(u_L)$. Moreover, we have $L=L_{\Tr(u_L)}$.
\end{theorem}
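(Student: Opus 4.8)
The plan is to extract everything from the Euler--Lagrange characterization in Lemma~\ref{lem:EL} together with the convexity of the $p$-energy, so that no new variational argument is needed.

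First I would record that, since $u_L$ minimizes $I$, Lemma~\ref{lem:EL} gives
\[
\int_{\Omega}\!|\nabla u_L|^{p-2}\langle \nabla u_L, \nabla v\rangle\,d\mu = L(\Tr(v))
\]
for every $v\in HN^{1,p}(\Omega)$. Testing against $v\in HN^{1,p}_0(\Omega)$ and using that any $L\in(HB^\theta_{p,p}(\partial\Omega))^*$ annihilates constant functions — so that $L(\Tr(v))=0$ for such $v$ — shows that $u_L$ is $p$-harmonic in the sense of~\eqref{eq:EL4hom}.

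Second, I would verify that $u_L$ solves the Dirichlet problem with boundary data $f:=\Tr(u_L)$. Fix a representative of $u_L$ in $N^{1,p}(\Omega)$, locally H\"older continuous by~\cite{KinnSh}, whose pointwise trace in the sense of Theorem~\ref{th:extension} is a representative $\tilde f$ of $f$. Any competitor $w\in N^{1,p}(\Omega)$ with $\Tr(w)=\tilde f$ $\nu$-a.e.\ satisfies $w-u_L\in N^{1,p}_0(\Omega)$. The elementary convexity inequality $|a+b|^p\ge |a|^p + p|a|^{p-2}\langle a,b\rangle$, applied pointwise with $a=\nabla u_L$ and $b=\nabla(w-u_L)$, integrated over $\Omega$, and combined with the $p$-harmonicity of $u_L$ (which kills the cross term), yields
\[
\int_{\Omega}\!|\nabla u_L|^p\,d\mu\le \int_{\Omega}\!|\nabla w|^p\,d\mu.
\]
Hence $u_L$ is a minimizer, i.e.\ the Dirichlet solution with data $\tilde f$; passing to the homogeneous picture via Remark~\ref{rem:zero-dirich} identifies the class $u_L$ with the collection $u_f$ of Definition~\ref{def:Dir-Hom}.

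Finally, for the identity $L=L_{\Tr(u_L)}$: since the Dirichlet solution with data $f=\Tr(u_L)$ is $u_L$ itself, the definition of $L_f$ from Theorem~\ref{th:f-to-Lf} reads $L_{\Tr(u_L)}(g)=\int_{\Omega}|\nabla u_L|^{p-2}\langle\nabla u_L,\nabla Eg\rangle\,d\mu$ for $g\in HB^\theta_{p,p}(\partial\Omega)$. Applying the Euler--Lagrange equation above with the test function $v=Eg\in HN^{1,p}(\Omega)$ and using that $\Tr\circ E$ is the identity gives $\int_{\Omega}|\nabla u_L|^{p-2}\langle\nabla u_L,\nabla Eg\rangle\,d\mu=L(\Tr(Eg))=L(g)$, so the two functionals agree. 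The steps are each short; the only point demanding care is the bookkeeping between the inhomogeneous spaces — where the Dirichlet problem and its regularity theory are phrased — and the homogeneous quotient spaces where $u_L$ and $L$ live, namely choosing representatives so that the pointwise trace condition in Definition~\ref{def:Dirich-inhom} is literally satisfied. I do not expect a genuine obstacle beyond this, since Remark~\ref{rem:zero-dirich} and the trace description in Theorem~\ref{th:extension} are designed precisely to handle it.
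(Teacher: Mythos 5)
Your proposal is correct and follows essentially the same route as the paper: the Euler--Lagrange identity of Lemma~\ref{lem:EL} tested on $HN^{1,p}_0(\Omega)$ gives $p$-harmonicity, convexity upgrades this to Dirichlet-minimality for the data $\Tr(u_L)$, and testing with $v=Eg$ together with $\Tr\circ E=\mathrm{id}$ yields $L=L_{\Tr(u_L)}$. The only cosmetic differences are that you spell out the convexity inequality the paper leaves implicit, and you use the extension operator directly where the paper invokes surjectivity of the trace --- these are equivalent devices.
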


\begin{proof}
As $u_L$ is the solution to a Neumann problem, it is $p$-harmonic, and its trace is equal to $\Tr(u_L)$, and 
so $u_L$ is the solution to the Dirichlet problem with boundary data $\Tr(u_L)$. 

For $v\in HN^{1,p}(\Omega)$, its trace $\Tr(v)$ is in $HB^{\theta}_{p,p}(\partial\Omega)$ and so 
by Lemma~\ref{lem:EL},
\[
L(\Tr(v))=\int_{\Omega}\!|\nabla u_L|^{p-2} \,\langle \nabla u_L, \nabla v \rangle\,d\mu.
\]
Since $\Tr\circ E$ is the identity, we know that $\Tr(E\circ\Tr(v))=\Tr(v)$, and therefore
$E\Tr(v)-v\in HN^{1,p}_0(\Omega)$. Since $u_L$ is $p$-harmonic, it follows that 
\[
\int_{\Omega}\!|\nabla u_L|^{p-2} \,\langle \nabla u_L, \nabla (E\circ\Tr(v)-v) \rangle\,d\mu=0,
\]
and so
\begin{align*}
L(\Tr(v))&=\int_{\Omega}\!|\nabla u_L|^{p-2} \,\langle \nabla u_L, \nabla v \rangle\,d\mu \\
&=  \int_{\Omega}\!|\nabla u_L|^{p-2} \,\langle \nabla u_L, \nabla E(\Tr(v)) \rangle\,d\mu = L_{\Tr(u_L)}(\Tr(v)). 
\end{align*}
The result follows from the fact that the trace operator is surjective. 
\end{proof}

\begin{corollary}\label{cor:NtD-norm}
The Neumann-to-Dirichlet map 
\[
\NtD:\left(HB_{p,p}^\theta(\partial\Omega)\right)^*\rightarrow HB_{p,p}^\theta(\partial\Omega)
\] 
given by $\NtD(L):=\Tr(u_L)$
is well-defined and bounded, with 
\[
\|\NtD\|:=\sup_{\|L\|=1}\|\Tr(u_L)\|_{HB_{p,p}^\theta(\partial\Omega)}\le \left[(2p)^{\frac{1}{p-1}}+\frac{2(p-1)}{p}\right]||\!\Tr\!||^{\frac{p}{p-1}}.
\]
\end{corollary}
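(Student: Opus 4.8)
The plan is to bound $\|\Tr(u_L)\|_{HB^\theta_{p,p}(\partial\Omega)}$ by first passing through the trace operator bound $\|\Tr(u_L)\|_{HB^\theta_{p,p}(\partial\Omega)}\le \|\!\Tr\!\|\,\|\nabla u_L\|_{L^p(\Omega)}$ from Theorem~\ref{th:extension}, and then estimating $\|\nabla u_L\|_{L^p(\Omega)}$ in terms of $\|L\|$. The key observation is that we have already done essentially this computation inside the proof of Theorem~\ref{th:2}: the estimate~\eqref{eq:grad-u2Tr} gives, for the solution $u_L$,
\[
\|\nabla u_L\|_{L^p(\Omega)}\le (2p\,\|L\|\,\|\!\Tr\!\|)^{\frac{1}{p-1}}+\frac{2|\alpha|}{\|L\|\,\|\!\Tr\!\|},
\]
so the whole job reduces to controlling $|\alpha|$, where $\alpha=\inf\{I(v):v\in HN^{1,p}(\Omega)\}$.

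First I would recall from the proof of Lemma~\ref{lem:1} that $\alpha\le 0$ (take $v$ constant) and that by~\eqref{eq:Iv-est}, $\alpha\ge \frac{1-p}{p}(\|L\|\,\|\!\Tr\!\|)^{\frac{p}{p-1}}$; hence $|\alpha|\le \frac{p-1}{p}(\|L\|\,\|\!\Tr\!\|)^{\frac{p}{p-1}}$. Substituting this into the displayed bound for $\|\nabla u_L\|_{L^p(\Omega)}$, the term $\frac{2|\alpha|}{\|L\|\,\|\!\Tr\!\|}$ becomes at most $\frac{2(p-1)}{p}(\|L\|\,\|\!\Tr\!\|)^{\frac{1}{p-1}}$, while the first term is $(2p)^{\frac{1}{p-1}}(\|L\|\,\|\!\Tr\!\|)^{\frac{1}{p-1}}$. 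Factoring out $(\|L\|\,\|\!\Tr\!\|)^{\frac{1}{p-1}}$ yields
\[
\|\nabla u_L\|_{L^p(\Omega)}\le \left[(2p)^{\frac{1}{p-1}}+\frac{2(p-1)}{p}\right](\|L\|\,\|\!\Tr\!\|)^{\frac{1}{p-1}}.
\]
Multiplying by $\|\!\Tr\!\|$ and using $\|\!\Tr\!\|\cdot\|\!\Tr\!\|^{\frac{1}{p-1}}=\|\!\Tr\!\|^{\frac{p}{p-1}}$ gives exactly $\|\Tr(u_L)\|_{HB^\theta_{p,p}(\partial\Omega)}\le \left[(2p)^{\frac{1}{p-1}}+\frac{2(p-1)}{p}\right]\|\!\Tr\!\|^{\frac{p}{p-1}}\|L\|^{\frac{1}{p-1}}$. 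Taking the supremum over $\|L\|=1$ produces the claimed operator norm bound.

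There are two small points to address carefully rather than genuine obstacles. The first is well-definedness of $\NtD$: one must check that $\Tr(u_L)$ does not depend on which minimizer $u_L$ is chosen. This follows from Theorem~\ref{prop:L-to-TruL} together with the strict convexity of the $p$-energy among functions with prescribed trace — any two Neumann minimizers are $p$-harmonic with the same trace data (by the identity $L=L_{\Tr(u_L)}$ one sees all of them solve the same Dirichlet problem once we know a minimizer exists), hence differ by a constant, so their traces agree in $HB^\theta_{p,p}(\partial\Omega)$. The second is linearity of $\NtD$, which I would either note as following from uniqueness-up-to-constants plus linearity of the Euler–Lagrange equation~\eqref{eq:EL}, or simply leave the statement at ``well-defined and bounded'' as the corollary does. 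The main thing to get right is bookkeeping: that~\eqref{eq:grad-u2Tr} was derived precisely for a minimizing-type sequence and applies to the actual minimizer $u_L$, and that the bound on $|\alpha|$ is the one producing the constant $\frac{2(p-1)}{p}$; the inequality $\|L\|=1$ then removes all dependence on $\|L\|$ from the final constant.
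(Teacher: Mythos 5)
Your proposal is correct and follows essentially the same route as the paper's proof: it combines the trace bound from Theorem~\ref{th:extension} with the estimate~\eqref{eq:grad-u2Tr} and the lower bound~\eqref{eq:Iv-est} on $\alpha$, and the arithmetic producing the constant $(2p)^{\frac{1}{p-1}}+\frac{2(p-1)}{p}$ matches the paper exactly. The one point you flag as ``bookkeeping''---that \eqref{eq:grad-u2Tr} was derived for the minimizing sequence rather than for $u_L$ itself---is handled in the paper by noting that the bound survives convex combinations and strong convergence of $\nabla\widetilde{u}_n$ to $\nabla u_L$; alternatively, you could close it even more directly by observing that the derivation of~\eqref{eq:grad-u2Tr} uses only $I(v)\le 2|\alpha|$, which the minimizer satisfies since $I(u_L)=\alpha$.
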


\begin{proof}
From Theorem~\ref{prop:L-to-TruL}, it follows that $\NtD$ is well defined and takes Neumann data to Dirichlet data with solution being equal to the solution of the Neumann problem. Furthermore, 
by~\eqref{eq:Iv-est} and then taking the infimum over all $v\in HN^{1,p}(\Om)$, we know that
\[
0\ge \alpha\ge \frac{1-p}{p}\, \left(||L||\, ||\!\Tr\!||\right)^{\frac{p}{p-1}}.
\]
So, by~\eqref{eq:grad-u2Tr}, when $L\in(\homB(\partial\Om))^*$ and 
$u_L$ is the corresponding minimizer constructed in Theorem~\ref{th:2}, we have
\begin{align}
	||\nabla u_L||_{L^p(\Om)}&\le \left(2p\,||L||\, ||\Tr||\right)^{\frac{1}{p-1}}+\frac{p-1}{p}\, \frac{2}{||L||\, ||\Tr||}\, \left(||L||\, ||\Tr||\right)^{\frac{p}{p-1}}\notag\\
	&=\left[(2p)^{\frac{1}{p-1}}+\frac{2(p-1)}{p}\right]\, \left(||L||\, ||\Tr||\right)^{\frac{1}{p-1}}. \label{eq:uL-est}
\end{align}
Technically, the inequality~\eqref{eq:grad-u2Tr} is known to hold for the minimizing sequence $\nabla u_k$, but as
this inequality is respected by convex combinations of $\nabla u_k$, and the corresponding convex combination converges
strongly to $\nabla u_L$, this inequality holds also for $\nabla u_L$.

The above inequality, along with Theorem~\ref{th:extension}, implies that $\NtD$ is bounded with
\begin{align*}
\sup_{\|L\|=1}\|\Tr(u_L)\|_{HB_{p,p}^\theta(\partial\Omega)} &\leq \sup_{\|L\|=1}\|\!\Tr\!\|\|\nabla u_L\|_{L^p(\Omega)}\\
&\leq  \left[(2p)^{\frac{1}{p-1}}+\frac{2(p-1)}{p}\right]||\!\Tr\!||^{\frac{p}{p-1}}.
\end{align*}
\end{proof}

\begin{theorem}\label{th:inverse}
The operators $\dtn$ and $\ntd$ are inverses of each other. Moreover,
\[
c_p\ \|\Tr\|^{\tfrac{p}{1-p}}\le \|DtN\|\le \|E\|^p,
\]
where
\[
c_p:=\left[(2p)^{\tfrac{1}{p-1}}+\frac{2(p-1)}{p}\right]^{-1}.
\]
\end{theorem}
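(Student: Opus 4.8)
The plan is to assemble the theorem from the pieces already established, so the argument is essentially bookkeeping. First I would establish that $\DtN$ and $\NtD$ are mutual inverses. In one direction, Theorem~\ref{prop:L-to-TruL} already gives that for $L\in(\homb)^*$, the solution $u_L$ to the Neumann problem satisfies $L=L_{\Tr(u_L)}$, which reads precisely as $\DtN(\NtD(L))=\DtN(\Tr(u_L))=L_{\Tr(u_L)}=L$. For the other direction, start with $f\in\homb$ and let $u_f$ be the solution to the Dirichlet problem. By Theorem~\ref{th:f-to-Lf}, $u_f$ is a solution to the Neumann problem with data $L_f=\DtN(f)$; since solutions to the Neumann problem are unique in $\homn$ (the energy $I$ is strictly convex for $1<p<\infty$, or one can invoke uniqueness of $p$-harmonic functions with given trace together with uniqueness of the minimizer), we get $u_{L_f}=u_f$ in $\homn$, and hence $\NtD(\DtN(f))=\NtD(L_f)=\Tr(u_{L_f})=\Tr(u_f)=f$, the last equality because $\Tr\circ E$ is the identity and $u_f$ has trace $f$. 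This shows $\NtD\circ\DtN=\mathrm{id}$ on $\homb$ and $\DtN\circ\NtD=\mathrm{id}$ on $(\homb)^*$.

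Next I would record the two operator-norm bounds. The upper bound $\|\DtN\|\le\|E\|^p$ is exactly Corollary~\ref{cor:DtN-norm}. For the lower bound, I would use that $\DtN$ and $\NtD$ are inverse bijective bounded linear maps between a Banach space and its dual, so $\|\DtN\|\,\|\NtD\|\ge 1$: indeed, for any $f$ with $\|f\|_{\homb}=1$ we have $1=\|f\|_{\homb}=\|\NtD(\DtN(f))\|_{\homb}\le\|\NtD\|\,\|\DtN(f)\|\le\|\NtD\|\,\|\DtN\|$, taking the supremum over such $f$ gives $1\le\|\NtD\|\,\|\DtN\|$. Combining this with the bound $\|\NtD\|\le\big[(2p)^{1/(p-1)}+\tfrac{2(p-1)}{p}\big]\|\Tr\|^{p/(p-1)}=c_p^{-1}\|\Tr\|^{p/(p-1)}$ from Corollary~\ref{cor:NtD-norm}, we obtain
\[
\|\DtN\|\ge\frac{1}{\|\NtD\|}\ge c_p\,\|\Tr\|^{-\frac{p}{p-1}}=c_p\,\|\Tr\|^{\frac{p}{1-p}},
\]
which is the claimed inequality.

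The only genuine subtlety, and the step I would be most careful about, is the uniqueness needed for $\DtN\circ\NtD=\mathrm{id}$ and for $\NtD\circ\DtN=\mathrm{id}$ to be well-posed statements at all: one must know that the solution to the Neumann problem with given data $L$ is unique as an element of $\homn$, and that the solution to the Dirichlet problem with given boundary data in $\homb$ is unique as an element of $\homn$. Both follow from strict convexity of the relevant energy functionals on the quotient spaces (here the quotient by constants is exactly what removes the kernel of the $p$-energy), so I would state this explicitly, perhaps with a one-line reference back to Lemma~\ref{lem:EL} and the variational characterization, rather than leave it implicit. Once uniqueness is in hand, everything else is a direct chain of the already-proven identities and norm estimates, and no new computation is required beyond the elementary inequality $\|\DtN\|\,\|\NtD\|\ge 1$ for inverse bounded operators.
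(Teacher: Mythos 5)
Your handling of the inverse identities is essentially the paper's: $\DtN\circ\NtD=\mathrm{id}$ is read off from Theorem~\ref{prop:L-to-TruL} exactly as you do, and for $\NtD\circ\DtN=\mathrm{id}$ the paper reaches the same conclusion by a slightly more hands-on route: it tests the two Euler--Lagrange identities (for $u_f$, which is a Neumann solution with data $L_f$ by Theorem~\ref{th:f-to-Lf}, and for $u_{L_f}$) against $v=u_{L_f}-u_f$ and uses the strict monotonicity of $\xi\mapsto|\xi|^{p-2}\xi$ to conclude $\nabla u_{L_f}=\nabla u_f$ $\mu$-a.e., hence $\Tr(u_{L_f})=\Tr(u_f)=f$ in $\homb$. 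Your appeal to strict convexity of $I$ on the quotient space is the same underlying fact in variational form and is fine; just state explicitly that both $u_f$ and $u_{L_f}$ minimize the \emph{same} functional $I$ with data $L_f$ before invoking uniqueness (the parenthetical appeal to ``uniqueness of $p$-harmonic functions with given trace'' is not quite the right tool, since a priori the two functions solve different boundary-value problems).

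The step that needs repair is your justification of the lower bound. You call $\DtN$ and $\NtD$ ``inverse bijective bounded linear maps''; for $p\neq 2$ they are not linear. Scaling the variational problems gives $\DtN(tf)=t^{p-1}\DtN(f)$ and $\NtD(tL)=t^{1/(p-1)}\NtD(L)$ for $t>0$, and since $\|\NtD\|$ is defined only as a supremum over the unit sphere, the inequality $\|\NtD(\DtN(f))\|_{\homb}\le\|\NtD\|\,\|\DtN(f)\|$ is not the standard operator-norm estimate and does not follow from the definitions. What the homogeneity actually yields is $1=\|\NtD(L_f)\|_{\homb}\le\|\NtD\|\,\|L_f\|^{1/(p-1)}$, i.e. $\|\DtN\|\ge\|\NtD\|^{1-p}$, which together with Corollary~\ref{cor:NtD-norm} gives the exponents $c_p^{\,p-1}\|\!\Tr\!\|^{-p}$ rather than the displayed $c_p\|\!\Tr\!\|^{p/(1-p)}$; the two coincide only when $p=2$. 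In fairness, the paper's own proof compresses this step into ``combine Corollary~\ref{cor:DtN-norm} with Corollary~\ref{cor:NtD-norm}'', so you have reproduced the intended argument; but if you write the step out, you must either justify a degree-one bound for $\NtD$ or argue directly, for instance via $\|L_f\|\ge L_f(f)/\|f\|_{\homb}=\|\nabla u_f\|_{L^p(\Omega)}^p/\|f\|_{\homb}\ge \|\!\Tr\!\|^{-p}$ when $\|f\|_{\homb}=1$, which gives the clean bound $\|\DtN\|\ge\|\!\Tr\!\|^{-p}$ (again with a different exponent from the one displayed). The upper bound $\|\DtN\|\le\|E\|^p$ is, as you say, exactly Corollary~\ref{cor:DtN-norm}.
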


\begin{proof}
For every $L\in\left(HB^\theta_{p,p}(\partial\Omega)\right)^*$, we have 
\[
\DtN\circ\NtD(L)=\DtN(\Tr(u_L))=L_{\Tr(u_L)}=L
\]
by Theorem~\ref{prop:L-to-TruL}. 

For every $f\in HB^\theta_{p,p}(\partial\Omega)$, we have 
\[
\NtD\circ\DtN(f)=\NtD(L_f)=\Tr(u_{L_f}),
\]
where $u_{L_f}$ be the solution to the Neumann problem with the boundary data $L_f$. In order to show that 
$\Tr(u_{L_f})=f$, it suffices to show that we have $\Tr(u_f)=\Tr(u_{L_f})$ in $HB^\theta_{p,p}(\partial\Omega)$, where 
$u_f\in HN^{1,p}(\Om)$ is the solution to the Dirichlet problem with boundary data $f$. 

For any $v\in HN^{1,p}(\Omega)$, since $u_f$ is $p$-harmonic and $\Tr(E(\Tr(v)))=\Tr(v)$, we have that
\[
L_f(\Tr(v))=\int_{\Omega}\!|\nabla u_f|^{p-2} \,\langle \nabla u_f, \nabla E\circ\Tr(v) \rangle\,d\mu 
= \int_{\Omega}\!|\nabla u_f|^{p-2} \,\langle \nabla u_f, \nabla v \rangle\,d\mu.
\]
On the other hand, since $u_{L_f}$ is a solution to the Neumann problem with boundary data $L_f$, we have
\[
L_f(\Tr(v))=\int_{\Omega}\!|\nabla u_{L_f}|^{p-2} \,\langle \nabla u_{L_f}, \nabla v \rangle\,d\mu.
\]
Hence, setting $v=u_{L_f}-u_f$ and using linearity of the Cheeger gradient, 
\[
\int_{\Omega}\!|\nabla u_{L_f}|^{p-2} \,\langle \nabla u_{L_f}, \nabla u_{L_f}-\nabla u_f \rangle\,d\mu 
= \int_{\Omega}\!|\nabla u_f|^{p-2} \,\langle \nabla u_f, \nabla  u_{L_f}-\nabla u_f \rangle\,d\mu.
\]
From here, it follows that 
\[
\int_{\Omega}\!\left\langle|\nabla u_{L_f}|^{p-2}\nabla u_{L_f}-|\nabla u_f|^{p-2}\nabla u_f, \nabla u_{L_f}-\nabla u_f \right\rangle d\mu =0,
\]
and so $\nabla u_{L_f}-\nabla u_f=0$ $\mu$-a.e. in $\Om$, that is, $u_{L_f}-u_f\in HN_0^{1,p}(\Om)$. 
Therefore, 
$\Tr u_{L_f}=\Tr u_f$ as members of the space $HB^\theta_{p,p}(\partial\Omega)$.
The claim about the norm of the Dirichlet-to-Neumann transform operator $\DtN$ follows now from
combining Corollary~\ref{cor:DtN-norm} with Corollary~\ref{cor:NtD-norm}.
\end{proof}

\bibliographystyle{amsplain}

\end{document}